\documentclass[a4paper,11pt,oneside,fleqn]{article}

% various mathematical symbols and environments
\usepackage{amsmath}
\usepackage{amssymb}
\usepackage{amsthm}
\newtheorem{thm}{Theorem}
\newtheorem{prop}[thm]{Proposition}
\newtheorem{lem}[thm]{Lemma}
\newtheorem{cor}[thm]{Corollary}
\newtheorem{cnj}[thm]{Conjecture}

\theoremstyle{remark}
\newtheorem{rmk}[thm]{Remark}

\usepackage{mathtools}
\usepackage{multirow}
 \newcommand{\Nat}{\mathbb{N}}
 \newcommand{\Integer}{\mathbb{Z}}
 \newcommand{\Real}{\mathbb{R}}

 \newcommand\xqed[1]{%
  \leavevmode\unskip\penalty9999 \hbox{}\nobreak\hfill
  \quad\hbox{#1}}
 \newcommand\qei{\xqed{$\triangle$}}
 \DeclareMathOperator{\Fix}{Fix}
 \DeclareMathOperator{\Id}{Id}
 \DeclareMathOperator{\Hf}{H}
 \DeclareMathOperator{\PF}{P}
 \DeclareRobustCommand{\VAN}[3]{#2}

\usepackage{enumitem}
% auxiliary packages to include figures
\usepackage{graphicx,setspace}
\usepackage{epstopdf}
\usepackage{float}
\usepackage[section]{placeins}
\usepackage[bf,small,textfont=rm]{caption}
%\usepackage{layout}

% references in Harvard style
\usepackage{harvard}
\citationstyle{dcu}
\usepackage{hyperref}
\hypersetup{%pdftex,
%bookmarks=true,
pdfborder={0 0 1},
hypertexnames=false,
breaklinks=true,
colorlinks=true,
linkcolor=blue,
urlcolor=red,
citecolor=cyan,
%hyperfootnotes=false,
pdftitle={Symmetries in the Lorenz-96 model},
pdfauthor={Dirk L. van Kekem and Alef E. Sterk},
pdfsubject={Analytical study of the Lorenz-96 model reveals its symmetries and proves the existence of a pitchfork bifurcation in each even dimension and a second one for dimensions that are multiples of four. Numerically it is shown that a full structure of \emph{q} pitchfork bifurcations exists in each dimension \emph{n}, where \emph{q} is a nonnegative integer denoting the powers of 2 in \emph{n}.},
pdfkeywords={Lorenz-96 model, Dynamical systems, Symmetry, Equivariant dynamical system, Pitchfork bifurcation, Center manifold, Invariant manifold}
}

\makeatletter
\gdef\harvardcite#1#2#3#4{%
  \global\@namedef{HAR@fn@#1}{\hyper@@link[cite]{}{cite.#1}{#2}}%
  \global\@namedef{HAR@an@#1}{\hyper@@link[cite]{}{cite.#1}{#3}}%
  \global\@namedef{HAR@yr@#1}{\hyper@@link[cite]{}{cite.#1}{#4}}%
  \global\@namedef{HAR@df@#1}{\csname HAR@fn@#1\endcsname}%
}
\makeatother

% document info
\author{Dirk L. van Kekem \& Alef E. Sterk}
\title{Symmetries in the Lorenz-96 model}
\date{\today}

\begin{document}
\maketitle

\begin{abstract}
\noindent The Lorenz-96 model is widely used as a test model for various applications, such as data assimilation methods. This symmetric model has the forcing $F\in\Real$ and the dimension $n\in\Nat$ as parameters and is $\Integer_n$-equivariant. In this paper, we unravel its dynamics for $F<0$ using equivariant bifurcation theory. Symmetry gives rise to invariant subspaces, that play an important role in this model. We exploit them in order to generalise results from a low dimension to all multiples of that dimension. We discuss symmetry for periodic orbits as well.

Our analysis leads to proofs of the existence of pitchfork bifurcations for $F<0$ in specific dimensions $n$: In all even dimensions, the equilibrium $(F,\ldots,F)$ exhibits a supercritical pitchfork bifurcation. In dimensions $n=4k$, $k\in\Nat$, a second supercritical pitchfork bifurcation occurs simultaneously for both equilibria originating from the previous one.

Furthermore, numerical observations reveal that in dimension $n=2^qp$, where $q\in\Nat\cup\{0\}$ and $p$ is odd, there is a finite cascade of exactly $q$ subsequent pitchfork bifurcations, whose bifurcation values are independent of $n$. This structure is discussed and interpreted in light of the symmetries of the model.
\end{abstract}

%%-------------------------------------
\section{Introduction}\label{sec:Intro}
\subsection{Setting of the problem}
\paragraph{Equivariant dynamical systems}It was not until the late 1970s that the study of symmetric dynamical systems gained great interest, when it was discovered that the symmetries of a system can have a big impact on its dynamics. Since then, a considerable amount of literature has been published on symmetry and bifurcations resulting in a rich extension of bifurcation theory, called \emph{equivariant bifurcation theory}. In this field a group-theoretic formalism is used to classify bifurcations and to describe solutions and other phenomena of a system. One of its most powerful results of this so-called equivariant bifurcation theory is the equivariant branching lemma, formulated first by \citeasnoun{Vanderbauwhede82} and \citeasnoun{Cicogna81} independently. A few years later a detailed overview of the theory of local equivariant bifurcations appeared in \cite{Golubitsky85,Golubitsky88}, which is still a standard reference in this field. This is followed in more recent years by other works with an overview of the new state-of-the-art, e.g.~\cite{Chossat00} with an applied mathematics approach and the more advanced and theoretical work \cite{Field07}.

Knowing the symmetries of a model can provide a lot of insight in the dynamics via equivariant bifurcation theory. This includes the occurrence of certain bifurcations, symmetry-related solutions, pattern formation and invariant manifolds (see for many concrete examples \cite{Chossat00,Golubitsky88} and references therein). Also, there are many examples of phenomena in nature that have some symmetry. To illustrate: the Rayleigh-B\'enard convection possesses a reflection symmetry, which leads to a pitchfork bifurcation (among others), as can be concluded by the equivariant branching lemma \cite{Golubitsky84}. Likewise, the Lorenz-63 model exhibits a pitchfork bifurcation due to symmetry, as it is derived from the Rayleigh-B\'enard convection \cite{Lorenz63}.

\paragraph{Lorenz-96 model}This paper concentrates on another model of Lorenz, namely, his 1996 model \cite{Lorenz06}. Already in 1984 he studied a 4-dimensional version of this model in his search for the simplest nontrivial forced dissipative system that is capable of exhibiting chaotic behaviour \cite{Lorenz84a}. By imposing symmetry conditions on the equations, he introduced the monoscale version of the $n$-dimensional Lorenz-96 model. The equations of this model are equivariant with respect to a cyclic permutation of the variables and so the system is completely determined by the equation for the $j$-th variable, which is given by
\begin{subequations}\label{eq:Lorenz96}
\begin{equation}
    \dot{x}_j =  x_{j-1}(x_{j+1}-x_{j-2}) - x_j + F, \qquad j = 1, \ldots, n,\label{eq:Lorenz96eq}
\end{equation}
and a `boundary condition'
\begin{equation}
    x_{j-n} = x_{j+n} = x_j.\label{eq:Lorenz96bc}
\end{equation}
\end{subequations}
Here, the variables $x_j$ can be associated to values of some atmospheric quantity (e.g.\ temperature) measured along a circle of constant latitude of the earth \cite{Lorenz06}. The latitude circle is divided into $n$ equal parts such that the index $j=1,\ldots,n$ denotes the longitude of a particular variable. The number $n\in\Nat$ is the dimension of the system, while the forcing $F\in\Real$ can be used as a bifurcation parameter.

The Lorenz-96 model is used by Lorenz to study the atmosphere and related problems \cite{Lorenz98,Lorenz06,Lorenz06b}. The simplicity of the model makes it also attractive and useful for various other applications, such as to test data assimilation methods \cite{Leeuw17-1,Ott04,Trevisan11} and to study spatiotemporal chaos \cite{Pazo08}. For a more complete overview of studies that exploit the Lorenz-96 model for applications, we refer to our papers \cite{Kekem17a1,Kekem17c1}.

Table~\ref{tab:DynamicsLz96} lists a selection of papers that investigate part of the dynamics of the Lorenz-96 model. In a recent article, we have proven analytically some basic properties for all dimensions (but mainly for positive parameter values) and the existence of Hopf and Hopf-Hopf bifurcations and we have studied numerically the routes to chaos for $F>0$ \cite{Kekem17a1}. However, there are only two papers that focus on the dynamics for negative parameter values $F$: in \cite{Lorenz84a} the chaotic attractor is studied for $F=-100$. In \cite{Kekem17c1} we have investigated the spatiotemporal properties of waves for both $F>0$ and $F<0$ and showed mostly numerically that the dynamics for $F<0$ depends on the parity of the dimension. A systematic understanding of how the symmetry influences the dynamics of the Lorenz-96 model is however still lacking. Therefore, we continue in this paper by using an analytical approach to examine the nature of the symmetry of the Lorenz-96 model and their implications for bifurcation sequences. We mainly focus on negative parameter values $F$, since for those values the symmetry has a larger influence on the dynamics, due to the existence of pitchfork bifurcations that are induced by symmetry as well. Whenever relevant, we will also discuss the implications of our findings on symmetry for the case of positive $F$.

\begin{table}[ht!]
\caption{Overview of the research into the dynamics of the monoscale Lorenz-96 model~\eqref{eq:Lorenz96} and the main values of the parameter $F$ that were used. In most cases, only the range for positive $F$ has been analysed.}
\makebox[\textwidth][c]{
\begin{tabular}{llr}
\hline
Reference & Subject & $F$ \\
\hline
\citeasnoun{Lorenz84}      & Chaotic attractor                      & $-100$ \\
\citeasnoun{Orrell03}      & Spectral bifurcation diagram           & $[0,17)$ \\
\citeasnoun{Lorenz05}      & Designing chaotic models               & $2.5, 5, 10, 20, 40$\\
\citeasnoun{Pazo08}        & Lyapunov vectors                       & $8$ \\
\citeasnoun{Karimi10}      & Extensive chaos                        & $[5,30]$ \\
\citeasnoun{Kekem17a1}      & Travelling waves \& bifurcations       & $[0,13)$ \\
\citeasnoun{Kekem17c1}       & Wave propagation                       & $(-4,4)$ \\
\hline
\end{tabular}
}
\end{table}\label{tab:DynamicsLz96}

\subsection{Summary of the results}
The main results of this paper can be summarised as follows: first of all, for any $n\in\Nat$ the $n$-dimensional Lorenz-96 model is equivariant with respect to a cyclic left shift of the coordinates (denoted by $\gamma_n$), i.e.\ the model has a $\Integer_n$-symmetry. It is well-known that equivariance gives rise to invariant linear subspaces. These invariant subspaces turn out to have very important implications for the dynamics of the model. We show how they can be utilised in extrapolating established facts in a certain dimension to all multiples of that dimension and exploit them to clarify the dynamical structure.

The trivial equilibrium $x_F = (F,\ldots, F)$, which exists for all $n\geq 1$ and all $F\in\Real$, is invariant under $\gamma_n$. Of particular interest is when the dimension $n$ is even, in which case $\Integer_2$-symmetry can be realised by $\gamma_n^{n/2}$. Equivariant bifurcation theory then shows that the equilibrium $x_F$ exhibits a pitchfork bifurcation. The emerging stable equilibria both exhibit again a pitchfork bifurcation if $n$ is a multiple of four. Both cases will be proven for the smallest possible dimension (i.e.~$n=2$, resp.~$n=4$) using a theorem from \citeasnoun{Kuznetsov04} on bifurcations for systems with $\Integer_2$-symmetry. A generalisation to all dimensions $n=2k$, resp.~$n=4k$, is then provided by the invariant manifolds.

Furthermore, a supercritical Hopf bifurcation destabilises all present stable equilibria after at most two pitchfork bifurcations, as is shown numerically in \cite{Kekem17c1}. Therefore, the dynamical structure for $n\geq4$ and $F<0$ can be divided in general into three classes, depending on the dimension $n$ (see also Figures~\ref{fig:Lz96-PFStructureFigOdd}--\ref{fig:Lz96-PFStructureFig4m} for schematic bifurcation scenarios):
\begin{enumerate}
  \item If $n$ is odd, then the first bifurcation of the equilibrium $x_F$ is a supercritical Hopf bifurcation.
  \item If $n=4k+2$, $k\in\Nat$, then only one pitchfork bifurcation takes place, followed by a Hopf bifurcation on each branch. This leads to two stable periodic orbits that coexists for the same parameter values~$F$.
  \item If $n=4k$, $k\in\Nat$, then all four stable equilibria generated by the second pitchfork bifurcation exhibit a Hopf bifurcation simultaneously, resulting in four coexisting stable periodic orbits.
\end{enumerate}

Of particular interest is the observation that in the last case there can be even more pitchfork bifurcations, that however occur after the equilibria undergo the Hopf bifurcations. We conjecture that the number of subsequent pitchfork bifurcations depends on the dimension $n$ as follows: let $n=2^qp$, where $q\in\Nat\cup\{0\}$ and $p$ is odd, then the number of successive pitchfork bifurcations is exactly equal to $q$. This finite cascade of $q$ pitchfork bifurcations leads to a structure of $2^{q+1}-1$ equilibria that are mutually conjugate by a power of $\gamma_n$. An example of such a structure is given by the schematic bifurcation diagram in Figure~\ref{fig:Lz96-PFStructureFullFig} for $n=2^4p$.

For positive forcing the first bifurcation is a Hopf or Hopf-Hopf bifurcation, which is not induced by symmetry. However, it turns out that the generated periodic orbits can be symmetric if their wave number has a common divisor with the dimension. Here, the wave number should be interpreted as the spatial frequency of the wave, which measures the number of `highs' or `lows' on the latitude circle, see for example \cite{Kekem17a1,Kekem17c1,Lorenz98}.

\subsection{Overview}
This paper has been divided into two parts. The first part, section~\ref{sec:AnalyticalResults}, deals with the analytical results of the research. We give an exposition of the symmetries of the Lorenz-96 model and corresponding invariant manifolds, using concepts from equivariant bifurcation theory. These results are used to prove the existence of a pitchfork bifurcation in all even dimensions and a second occurrence of pitchfork bifurcations in all dimensions of the form $n=4k$.

The second part of this paper, section~\ref{sec:NumericalResults}, is devoted to a further, numerical exploration of the dynamics for all dimensions. First, we will see that the periodic orbits after a supercritical Hopf bifurcation still have some symmetry. Secondly, we investigate numerically and explain by our exposition of symmetry the existence of the structure with exactly $q$ subsequent pitchfork bifurcations in dimension $n=2^qp$.

%----------------------------------------------------------------------------
\section{Analytical results}\label{sec:AnalyticalResults}
In this section we will describe the symmetry of the Lorenz-96 model using concepts from equivariant dynamical systems theory. For a detailed and clear overview of this field, we refer to the standard textbooks on bifurcation theory for symmetric systems \cite{Golubitsky85,Golubitsky88}. The symmetry also gives rise to invariant manifolds, that turn out to be very important in our model. We will show how these invariant manifolds can be exploited. Lastly, we show that the symmetry of the model leads to subsequent pitchfork bifurcations for particular dimensions.

\subsection{$\Integer_n$-symmetry and invariant manifolds}\label{sec:symmetries}
\paragraph{Cyclic symmetry}Let $n\in\Nat$ be arbitrary and denote the right-hand-side of system~\eqref{eq:Lorenz96eq} with dimension $n$ by $f_n(x,F)$, such that $f_n:\Real^n\times\Real \rightarrow\Real^n$. Consider the following $n$-dimensional permutation matrix:
\begin{equation}\label{eq:generator}
  \gamma_n = \begin{pmatrix}
             0 & 1 & 0 & \cdots & 0 \\
              & 0 & 1 &  & \\
             \vdots &  & \ddots & \ddots & \vdots \\
             0 &  & \cdots & 0 & 1 \\
             1 & 0 & \cdots &  & 0
           \end{pmatrix}.
\end{equation}
It is obvious that the linear mapping $\gamma_n:\Real^n\rightarrow\Real^n$ acts like a cyclic left shift and that $\gamma_n^n = \Id_n$, the $n$-dimensional identity matrix. We define the cyclic group generated by $\gamma_n$ as
\begin{equation*}
    \Gamma_n :=  \left\langle\gamma_n\right\rangle,
\end{equation*}
which is isomorphic to the additive group $\mathbb{Z}/n\mathbb{Z}$. A key observation is that
\begin{equation*}%\label{eq:equivariant}
    f_n(\gamma_n^j x,F) = \gamma_n^j f_n(x,F)
\end{equation*}
holds for any $j \in \Nat$ and any $n\in \Nat$. This immediately implies the following result:
\begin{prop}[$\Integer_n$-symmetry]\label{prop:L96equivariant}
  For any dimension $n\geq1$ the Lorenz-96 model is $\Gamma_n$-equivariant. \qed
\end{prop}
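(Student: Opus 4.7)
The plan is to verify the single identity $f_n(\gamma_n x,F)=\gamma_n f_n(x,F)$ by a direct componentwise computation, then bootstrap from the generator $\gamma_n$ to the whole cyclic group $\Gamma_n$ by a one-line induction. Since $\Gamma_n=\langle\gamma_n\rangle$, equivariance under the generator is exactly what is needed, and the boundary condition~\eqref{eq:Lorenz96bc} ensures that all index arithmetic can be read modulo~$n$.

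Concretely, I would set $y=\gamma_n x$ so that $y_j=x_{j+1}$ for every $j$, with indices interpreted modulo~$n$ via \eqref{eq:Lorenz96bc}. Substituting into the $j$-th component of the vector field gives
\begin{equation*}
 f_n(y,F)_j = y_{j-1}(y_{j+1}-y_{j-2})-y_j+F = x_j(x_{j+2}-x_{j-1})-x_{j+1}+F,
\end{equation*}
which is precisely $f_n(x,F)_{j+1}$, i.e.\ the $j$-th component of $\gamma_n f_n(x,F)$. This establishes $f_n(\gamma_n x,F)=\gamma_n f_n(x,F)$. Applying this identity $j$ times (formally by induction on $j$) yields $f_n(\gamma_n^j x,F)=\gamma_n^j f_n(x,F)$ for every $j\in\Nat$, which is the displayed identity in the excerpt.

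Because every element of $\Gamma_n$ has the form $\gamma_n^j$ with $0\le j<n$, the above identity exactly says that each element of $\Gamma_n$ commutes with $f_n(\cdot,F)$, i.e.\ the system is $\Gamma_n$-equivariant in the sense of \cite{Golubitsky88}. The only thing to be a little careful about is the index bookkeeping near the ``boundary'' (for $j=1,2,n-1,n$), but this is handled uniformly by \eqref{eq:Lorenz96bc}; there is no genuine obstacle, which is why the authors state the identity essentially as an observation and relegate the verification to a remark.
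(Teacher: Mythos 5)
Your proof is correct and takes essentially the same approach as the paper: the paper simply records the identity $f_n(\gamma_n^j x,F)=\gamma_n^j f_n(x,F)$ as a ``key observation'' and declares the proposition immediate, while you supply the componentwise verification for the generator $\gamma_n$ and the induction to all powers. The index bookkeeping via the boundary condition and the reduction to the generator of the cyclic group are exactly what the authors have in mind.
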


Furthermore, powers of $\gamma_n$ generate subgroups of $\Gamma_n$, namely,
\begin{equation}\label{eq:subgroup}
  G_n^m = \left\langle\gamma_n^m\right\rangle < \Gamma_n, \qquad 0 < m \leq n, \quad m | n.
\end{equation}
The order of a subgroup $G_n^m$ is $n/m$ and it is isomorphic to $\Integer/(n/m)\Integer$. These subgroups $G_n^m$ are \emph{isotropy subgroups} of special equilibrium solutions that are of the form
\begin{equation}\label{eq:solutionxm}
    x^m = (A_m,\ldots, A_m), \qquad A_m = (a_0, \ldots, a_{m-1}),\qquad 0 < m \leq n,\quad m|n,
\end{equation}
where $a_j \in \Real$. The coordinates of $x^m$ have $n/m$ repetitions of the block $A_m$ and, hence, $\gamma_n^{km} x^m = x^m$ for all $k\in\Nat$. Later on, we will encounter equilibria of system~\eqref{eq:Lorenz96} which have indeed such a structure (see sections~\ref{sec:PF1} and~\ref{sec:PFcascade}).

\paragraph{Invariant manifolds}Associated to an isotropy subgroup $G < \Gamma_n$ is the \emph{fixed-point subspace} $\Fix(G)$, i.e.\ an invariant linear subspace consisting of all points in $\Real^n$ that satisfy $\gamma x = x$ for any element $\gamma\in G$. It is a well-known result that such a fixed-point subspace is an invariant set of the dynamical system \cite{Golubitsky88}. Here, the fixed-point subspace which is fixed by the complete subgroup $G_n^m$ is given by
\begin{equation}\label{eq:fixedpointsubspaceGmn}
  \Fix(G_n^m) = \{x\in\Real^n: x = x^m\},
\end{equation}
where $x^m$ is as in equation~\eqref{eq:solutionxm}.

The fixed-point subspace~\eqref{eq:fixedpointsubspaceGmn} is an invariant manifold of dimension $m$.  Also, each invariant manifold of dimension $m|n$ contains all of its `predecessors' with dimension $m'$ such that $m'|m$:
\begin{equation*}%\label{eq:nestedfamily}
  \Fix(\Gamma_n) = \Fix(G^{1}_n) \subset \Fix(G^{m'}_n) \subset \Fix(G^m_n) \subset \Fix(G^{n}_n) = \Real^n.
\end{equation*}
These invariant subspaces constitute nested families of subspaces which are all invariant under the flow of the Lorenz-96 model.

Equivariance also implies that if $x(t)$ is a solution of the Lorenz-96 model, then its \emph{conjugate} solutions, $\gamma_n^j x(t)$, are solutions as well for any $j\in\Nat$. Moreover, for each equilibrium solution $x^m\in\Fix(G^m_n)$ all conjugate solutions $\gamma_n^j x^m$ with $0\leq j < m$ are elements of the same fixed-point subspace $\Fix(G^m_n)$ and have the same properties\footnote{For example, the eigenvalues of conjugate solutions are equal.} (up to symmetry) as $x^m$ by permutation of the governing equations. This fact allows us to study only one of the equilibria in the \emph{group orbit} $\{x: x=\gamma_n^j x^m, 0\leq j < m \}$ of $x^m$.

\paragraph{Reduction of dimension}The preceding observation, together with the fact that the coordinates of points $x^m$ have repetitions when $m<n$, simplifies our analysis a lot. In particular, it implies that we can reduce the number of governing equations of the system inside $\Fix(G_n^m)$. In fact, we have the following important result:
\begin{prop}\label{prop:invmanifold}
    Let $m\in\Nat$ and let $n= k m$ be any multiple of $m$. The dynamics of the $n$-dimensional Lorenz-96 model restricted to the invariant manifold $\Fix(G_{n}^m)$ is topologically equivalent to the Lorenz-96 model of dimension $m$.
\end{prop}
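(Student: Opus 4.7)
The plan is to exhibit an explicit linear diffeomorphism between $\Real^m$ and $\Fix(G_n^m)$ that conjugates the two vector fields, which is strictly stronger than topological equivalence. Let $k=n/m$ and define $\pi:\Real^m\to\Real^n$ by stacking $k$ identical copies,
\[ \pi(y)=(y_1,\ldots,y_m,\,y_1,\ldots,y_m,\,\ldots,\,y_1,\ldots,y_m). \]
By the characterisation~\eqref{eq:fixedpointsubspaceGmn}, $\pi$ is a linear isomorphism from $\Real^m$ onto $\Fix(G_n^m)$. It therefore suffices to verify the intertwining identity $f_n(\pi(y),F)=\pi\bigl(f_m(y,F)\bigr)$ for every $y\in\Real^m$; this identity implies that $\pi^{-1}$ smoothly conjugates the flow of $f_n$ restricted to $\Fix(G_n^m)$ with the flow of $f_m$ on $\Real^m$, and in particular the two systems are topologically equivalent.

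The intertwining reduces to a direct index computation. Write $\hat\jmath\in\{1,\ldots,m\}$ for the representative of $j\in\{1,\ldots,n\}$ modulo $m$. By construction, $\pi(y)$ is simultaneously $n$-periodic and $m$-periodic, so $\pi(y)_{j\pm\ell}=y_{\widehat{j\pm\ell}}$ for every shift $\ell$. Substituting into the $j$-th component of~\eqref{eq:Lorenz96eq} yields
\[ f_n(\pi(y),F)_j = y_{\widehat{j-1}}\bigl(y_{\widehat{j+1}}-y_{\widehat{j-2}}\bigr) - y_{\hat\jmath} + F, \]
which, after invoking the boundary condition~\eqref{eq:Lorenz96bc} applied to dimension $m$, is exactly the $\hat\jmath$-th component of $f_m(y,F)$. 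Hence the $n$ equations defining the restricted vector field collapse into $k$ identical copies of the same $m$-dimensional Lorenz-96 system, which is precisely what it means for $\pi$ to intertwine $f_m$ and $f_n\big|_{\Fix(G_n^m)}$.

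No serious obstacle arises: the argument is bookkeeping with cyclic indices. The one point worth flagging is the low-dimensional cases $m\in\{1,2,3\}$, in which the neighbours $j-2$, $j-1$, $j+1$ no longer pick out three distinct entries of $y$; however, both $f_n$ (restricted to $\Fix(G_n^m)$) and $f_m$ are defined by substituting the same cyclic boundary condition into~\eqref{eq:Lorenz96eq}, so the reduction modulo $m$ reproduces $f_m$ verbatim in each case. Once the intertwining identity is in hand, the topological equivalence asserted by the proposition follows immediately from the fact that $\pi$ is a smooth diffeomorphism onto $\Fix(G_n^m)$.
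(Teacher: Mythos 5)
Your proposal is correct and follows essentially the same route as the paper: both identify $\Fix(G_n^m)$ with $\Real^m$ via the duplication map (equivalently, its inverse projection onto the first $m$ coordinates) and observe that the $n$ equations restricted to this subspace collapse into $n/m$ copies of the $m$-dimensional system. Your version merely makes the intertwining identity and the low-dimensional index bookkeeping explicit, which upgrades the stated topological equivalence to a smooth linear conjugacy.
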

\begin{proof}
  Let $n\in\Nat$ be as given and restrict the $n$-dimensional Lorenz-96 model to the invariant manifold $\Fix(G^m_{n})$. By definition~\eqref{eq:fixedpointsubspaceGmn} we have that the entries of any $x\in \Fix(G^m_{n})$ repeat as $x_{j+m} = x_j$ with the index modulo $n$. It follows immediately that equation~\eqref{eq:Lorenz96eq} for the $(j+m)$-th coordinate equals that for the $j$-th coordinate. Hence, we are left with $n/m$ copies of an $m$-dimensional Lorenz-96 model on $\Fix(G^m_{n})$.

  Furthermore, since $\Fix(G^m_{n})$ and each of its copies have dimension $m$, the dynamics on $\Fix(G^m_{n})$ is governed by $m$ equations only and hence by the Lorenz-96 model of dimension $m$. Hence, on $\Fix(G^m_{n})$ we can reduce to a lower dimensional model. As homeomorphism between the invariant manifold $\Fix(G^m_{n})$ and $\Real^{m}$ (the space of the $m$-dimensional Lorenz-96 system) one can take the function which selects the first $m$ coordinates and drops the remaining coordinates, leaving us with the $m$-dimensional Lorenz-96 model. Its inverse is then the map which duplicates the given $m$ coordinates $n/m$ times.
\end{proof}

\begin{rmk}\label{rmk:invmanifold}
  Proposition~\ref{prop:invmanifold} enables us to generalise results from low dimensions to higher dimensions. For example, when in the $m$-dimension\-al Lorenz-96 model a certain bifurcation occurs, then generically for every multiple $n= k m$, $k\in\Nat$, the same bifurcation occurs in the $n$-dimensional model. This vastly reduces the proof of facts that occur in many dimensions, since it comes down to search for the lowest possible dimension to occur and to prove it for that particular dimension. By Proposition~\ref{prop:invmanifold} then, this proves the property for infinitely many dimensions.

  A note of caution is due here, since two problems can occur:
  \begin{enumerate}
    \item It might happen that another bifurcation will take place before the phenomena extrapolated from a lower dimension and thus a different attractor gains stability, resulting in a different route to chaos.
    \item Besides that, another attractor can exist with no or different symmetry (i.e.\ in another subspace than $\Fix(G_n^m)$) and whose route to chaos is different.
  \end{enumerate}
  In both cases, chaos possibly occurs for smaller parameter values. What the method of Proposition~\ref{prop:invmanifold} does provide, are the features and bifurcations of the attractors inside the subspace $\Fix(G_n^m)$, for any $n$ that is a multiple of $m$. \qei
\end{rmk}

\subsection{First pitchfork bifurcation}\label{sec:PF1}
System~\eqref{eq:Lorenz96} has in any dimension the trivial equilibrium
\begin{equation}\label{eq:Lztriveq}
  x_F = (F,\ldots,F), \quad F \in \Real.
\end{equation}
The eigenvalues of this equilibrium can be determined easily using the circulant nature of the Jacobian matrix. Let
\begin{equation*}
\rho_j = \exp\left(-2\pi i \tfrac{j}{n}\right),
\end{equation*}
then it is shown in \cite{Kekem17a1} that these eigenvalues are given by
\begin{equation}\label{eq:Lzsimpleevn}
  \lambda_j(F,n)
    = -1 + F\rho_j^1 - F\rho_j^{n-2}
\end{equation}
for all $j=0,\dots,n-1$. We omit the dependence on $n$ from now on and write $\lambda_j$ or $\lambda_j(F)$ for $\lambda_j(F,n)$. The eigenvector corresponding to $\lambda_j$ can be expressed in terms of $\rho_j$ as well:
\begin{equation}\label{eq:Lzeigenvector}
v_j = \frac{1}{\sqrt{n}}\begin{pmatrix*} 1 & \rho_j & \rho_j^2 & \cdots & \rho_j^{n-1} \end{pmatrix*}^{\top}.
\end{equation}

Observe that the eigenvalue $\lambda_0$ equals $-1$. Due to the fact that $\rho_{n-j} = \bar{\rho}_j$, all the other eigenvalues and eigenvectors form conjugate pairs as
\begin{align}\label{eq:Lzevconj}
  \lambda_j &= \bar{\lambda}_{n-j},\\
  v_j &= \bar{v}_{n-j},\nonumber
\end{align}
except when $n$ is even, in which case the eigenvalue for $j = \tfrac{n}{2}$ is real and equals $\lambda_{n/2} = -1 - 2F$. This is the only eigenvalue that depends on the parameter $F$ and is purely real and thus plays a key role in this study. We can have more real eigenvalues when $n$ is a multiple of 3, in which case the eigenvalues for $j = \tfrac{n}{3}, \tfrac{2n}{3}$ are both fixed and equal to $-1$.

For every even dimension $n$ the eigenvalue $\lambda_{n/2}$ equals $0$ at $F = -\tfrac{1}{2}$. This gives rise to the first of our main results:
\begin{thm}[First pitchfork bifurcation]\label{thm:PFBif}
  Let $n \in \Nat$ be even. Then the trivial equilibrium $x_F$ exhibits a supercritical pitchfork bifurcation at the parameter value $F_{\PF,1} := -\tfrac{1}{2}$.
\end{thm}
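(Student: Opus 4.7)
The strategy is the one foreshadowed in the introduction and in Remark~2.3: first establish the pitchfork in the smallest admissible dimension, namely $n=2$, then invoke Proposition~\ref{prop:invmanifold} to lift it to every even $n$. For the base case, set $n=2$ and translate by $y = x - x_F$ so that the equilibrium sits at the origin. The Jacobian at $y=0$ is
\begin{equation*}
  A(F) = \begin{pmatrix} -1-F & F \\ F & -1-F \end{pmatrix},
\end{equation*}
with eigenvalues $\lambda_0 = -1$ (eigenvector $(1,1)^{\top}$, symmetric under $\gamma_2$) and $\lambda_1(F) = -1-2F$ (eigenvector $q = (1,-1)^{\top}$, anti-symmetric, i.e.\ $\gamma_2 q = -q$). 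Thus at $F_{\PF,1} = -\tfrac{1}{2}$ there is a simple zero eigenvalue whose critical eigenvector lies in the $(-1)$-eigenspace of the generating involution, which is precisely the configuration that triggers a pitchfork rather than a saddle-node in Kuznetsov's $\Integer_2$-equivariant normal form theorem.

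Next I would verify the two non-degeneracy conditions of that theorem. Transversality is trivial: $\frac{d\lambda_1}{dF}(-\tfrac12) = -2 \ne 0$. For the cubic coefficient, I would change coordinates to $u = \tfrac12(y_1+y_2)$, $v = \tfrac12(y_1-y_2)$, in which the system decouples at linear order into
\begin{align*}
  \dot u &= -u + 2v^{2}, \\
  \dot v &= -(1+2F)\,v - 2uv.
\end{align*}
A quadratic ansatz $u = h(v,F) = a(F)\,v^{2}+\cdots$ for the centre manifold gives $a(-\tfrac12) = 2$, and substitution into $\dot v$ yields the reduced equation
\begin{equation*}
  \dot v = -(1+2F)\,v - 4 v^{3} + O(v^{5}),
\end{equation*}
whose cubic coefficient $-4$ is non-zero and negative; combined with the sign of $d\lambda_1/dF$ this identifies the bifurcation as a supercritical pitchfork (the two new equilibria $v = \pm\sqrt{-(1+2F)/4}$ exist and are stable for $F < -\tfrac12$).

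To pass from $n=2$ to an arbitrary even $n=2k$, I invoke Proposition~\ref{prop:invmanifold} with $m=2$: the subspace $\Fix(G_{n}^{2})$ is invariant and $2$-dimensional, and the restricted flow is topologically equivalent to the $2$-dimensional Lorenz-96 model. Hence the pitchfork bifurcation obtained above produces, inside $\Fix(G_n^2)$, two new equilibria of the form $x^{2}=(A_{2},\dots,A_{2})$ of system~\eqref{eq:Lorenz96}. To promote this to a pitchfork of the full $n$-dimensional system one must argue that the critical centre direction is not swamped by other neutral modes: using the eigenvalue formula~\eqref{eq:Lzsimpleevn} and writing $\theta = 2\pi j/n$, the condition $\operatorname{Re}\lambda_j(-\tfrac12)=0$ reduces to $2\cos^{2}\theta - \cos\theta - 3 = 0$, whose only real solution $\cos\theta=-1$ corresponds to $j=n/2$. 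Thus $\lambda_{n/2}$ is the unique critical eigenvalue at $F=-\tfrac12$, the full centre manifold is one-dimensional and lies inside $\Fix(G_{n}^{2})$, and the bifurcation inherited from the $2$-dimensional reduction is exactly a pitchfork of $x_F$ in $\Real^{n}$.

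The main obstacle I anticipate is the cubic coefficient calculation on the centre manifold; everything else (linearisation, transversality, and the lift through Proposition~\ref{prop:invmanifold}) is essentially bookkeeping, whereas the sign and non-vanishing of the cubic term is what distinguishes a bona fide supercritical pitchfork from a degenerate or subcritical one. Carrying out the quadratic-centre-manifold expansion above in clean coordinates $(u,v)$ keeps this calculation short and transparent.
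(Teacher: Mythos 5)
Your proposal is correct and follows essentially the same route as the paper: prove the pitchfork for $n=2$ via Kuznetsov's $\Integer_2$-equivariant theorem (the critical eigenvector is anti-symmetric under $R_2=\gamma_2$) and lift to all even $n$ by Proposition~\ref{prop:invmanifold}. The two extra checks you carry out explicitly --- the cubic coefficient on the centre manifold and the uniqueness of the critical eigenvalue at $F=-\tfrac{1}{2}$ --- appear in the paper only as a remark (the normal form $\dot u = -2\alpha u - \tfrac{4}{n}u^3$) and in the eigenvalue discussion preceding the theorem, respectively, so they are welcome refinements rather than a different argument.
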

Note that the index $1$ of $F_{\PF}$ anticipates the possibility of more pitchfork bifurcations, of which this is the first one in line for decreasing $F$. We prove Theorem~\ref{thm:PFBif} using the symmetries of the model. By Proposition~\ref{prop:invmanifold}, proving the theorem boils down to proving the existence of a pitchfork bifurcation in the case $n=2$  and subsequently generalise it to any even dimension $n$. The same procedure will be applied to prove the existence of a second occurrence of pitchfork bifurcations in section~\ref{sec:PF2}.

The proof of Theorem~\ref{thm:PFBif} (and also that of the second pitchfork bifurcation) rely on a theorem taken from \cite{Kuznetsov04}. Before we state this result, let us first introduce some notation. Let $R_n$ be an $n\times n$ matrix that defines a symmetry transformation $x \mapsto R_n x$. Furthermore, we decompose $\Real^n$ into a direct sum $\Real^n = X^+_n \oplus X^-_n$, where
\begin{align*}
    X^+_n &:= \{x\in\Real^n: R_n x = x \}, \\
    X^-_n &:= \{x\in\Real^n: R_n x = -x \}.
\end{align*}

\begin{thm}[Kuznetsov, 2004]\label{thm:Kuznetsov}
  Suppose that a $\Integer_2$-equivariant system
  \begin{equation*}
    \dot{x} = f(x, \alpha), \quad x \in \Real^n,\quad \alpha \in \Real^1,
  \end{equation*}
  with smooth $f$, $R_n f(x, \alpha) = f(R_n x, \alpha)$ and $R_n^2 = \Id_n$, has at $\alpha = 0$ the fixed equilibrium $x_0 = 0$ with simple zero eigenvalue $\lambda_1 = 0$, and let $v \in \Real^n$ be the corresponding eigenvector.

  Then the system has a 1-dimensional $R_n$-invariant center manifold $W^c_\alpha$ and one of the following alternatives generically takes place:
  \begin{enumerate}[label=\emph{(\roman*)}]
    \item \emph{(fold)} If $v \in X^+_n$, then $W^c_\alpha \subset X^+_n$ for all sufficiently small $|\alpha|$ and the restriction of the system to $W^c_\alpha$ is locally topologically equivalent near the origin to the normal form
        \begin{equation*}
            \dot{\xi} = \beta \pm \xi^2;
        \end{equation*}
    \item \emph{(pitchfork)} If $v \in X^-_n$, then $W^c_\alpha \cap X^+_n = x_0$ for all sufficiently small $|\alpha|$ and the restriction of the system to $W^c_\alpha$ is locally topologically equivalent near the origin to the normal form
        \begin{equation*}
            \dot{\xi} = \beta\xi \pm \xi^3. \hfill \qed
        \end{equation*}
  \end{enumerate}
\end{thm}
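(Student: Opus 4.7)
The plan is to combine the classical center manifold theorem with the $\Integer_2$-symmetry to reduce to a one-dimensional, parameter-dependent ODE whose form is forced by the action of $R_n$ on the center direction, and then identify the resulting normal form. First I would observe that differentiating $R_n f(x,\alpha) = f(R_n x,\alpha)$ at $x = 0$ gives $R_n Df(0,0) = Df(0,0) R_n$, so $R_n$ preserves the eigenspaces of $Df(0,0)$. Since $\lambda_1 = 0$ is simple with eigenvector $v$ and $R_n^2 = \Id_n$, we must have $R_n v = \pm v$, i.e.\ either $v \in X^+_n$ or $v \in X^-_n$. The parameter-dependent center manifold theorem then produces a smooth $1$-dimensional locally invariant manifold $W^c_\alpha$ through $x_0 = 0$, tangent to $\operatorname{span}(v)$, for all small $|\alpha|$.

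Next I would upgrade $W^c_\alpha$ to an $R_n$-invariant object. The cleanest route is to note that since $f$ is $R_n$-equivariant, $R_n W^c_\alpha$ is also a center manifold, and by the uniqueness of center manifolds up to any prescribed finite order the two agree on the formal Taylor level; equivalently, one may invoke the equivariant center manifold theorem to select $W^c_\alpha$ with $R_n W^c_\alpha = W^c_\alpha$ outright. This lets me pick a coordinate $\xi$ on $W^c_\alpha$ in which $R_n$ acts either as $\xi \mapsto \xi$ (case $v \in X^+_n$) or as $\xi \mapsto -\xi$ (case $v \in X^-_n$), and gives a reduced scalar equation $\dot{\xi} = g(\xi,\beta)$, where $\beta$ is the parameter on the center manifold.

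In case (i), $X^+_n$ is flow-invariant (if $R_n x = x$ then $R_n f(x,\alpha) = f(R_n x,\alpha) = f(x,\alpha)$), so the restriction of the system to $X^+_n$ already contains the eigendirection $v$ and a center manifold of it serves as a center manifold of the full system; hence $W^c_\alpha \subset X^+_n$. The reduced equation carries no symmetry constraint, and the conditions $g(0,0) = 0$, $g_\xi(0,0) = 0$ together with the generic nondegeneracies $g_\beta(0,0) \neq 0$ and $g_{\xi\xi}(0,0) \neq 0$ give, by the standard saddle-node normal form theorem, the desired topological equivalence to $\dot{\xi} = \beta \pm \xi^2$. In case (ii), the tangency of $W^c_\alpha$ to $\operatorname{span}(v) \subset X^-_n$ combined with $X^+_n \cap X^-_n = \{0\}$ forces $W^c_\alpha \cap X^+_n = \{x_0\}$ for $|\alpha|$ small. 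The action $\xi \mapsto -\xi$ makes $g$ odd in $\xi$, so $g(\xi,\beta) = a(\beta)\xi + b(\beta)\xi^3 + O(\xi^5)$ with $a(0) = 0$; the generic conditions $a'(0) \neq 0$ and $b(0) \neq 0$, after a standard rescaling of $\xi$ and $\beta$, produce the pitchfork normal form $\dot{\xi} = \beta \xi \pm \xi^3$.

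The main obstacle is the equivariance of the center manifold: off-the-shelf center manifold theorems give uniqueness only up to finite differentiability, so one must either invoke a dedicated equivariant version (e.g.\ Vanderbauwhede's construction) or argue that only the formal jets of the reduction matter for the normal form classification. Once the $R_n$-action on $W^c_\alpha$ is correctly diagonalised, the rest is a mechanical appeal to the one-dimensional fold and pitchfork normal form theorems, the latter applied to an odd one-parameter family.
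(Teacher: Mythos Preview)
The paper does not give its own proof of this theorem: it is quoted verbatim from \cite{Kuznetsov04} and marked with a \qed, i.e.\ it is used as a black box. So there is no in-paper argument to compare against.

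That said, your outline is the standard route and is essentially correct. A couple of small points are worth tightening. First, in case~(ii) your justification of $W^c_\alpha \cap X^+_n = \{x_0\}$ by ``tangency to $\operatorname{span}(v)\subset X^-_n$ and $X^+_n\cap X^-_n=\{0\}$'' is a little loose as stated; the clean argument uses exactly the $R_n$-invariance you already secured: if $p\in W^c_\alpha\cap X^+_n$ then $R_n p=p$, but $R_n$ acts on $W^c_\alpha$ as $\xi\mapsto -\xi$, forcing $\xi=0$. Second, you correctly flag that the equivariance of the center manifold is the only nontrivial step; citing an equivariant center manifold theorem (e.g.\ Vanderbauwhede or Ruelle) is the honest way to close that gap, since the ``$R_n W^c_\alpha$ is also a center manifold'' argument runs into the non-uniqueness you mention. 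With those two clarifications your sketch is a complete proof.
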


\begin{rmk}\label{rmk:Kuznetsov}
  At the pitchfork bifurcation the equilibrium that satisfies $R_n x_0 = x_0$ changes stability, while two $R_n$-conjugate equilibria appear \cite{Kuznetsov04}. In terms of the fixed-point subspaces, this means that the resulting $R_n$-conjugate equilibria are contained in a larger subspace than the original. In section~\ref{sec:PFcascade} we will elaborate further on this.

  The proofs below of the first, resp.\ the second (in the next section), pitchfork bifurcation are based on the lowest possible dimensions, i.e.\ $m=2$ and $m=4$. In both cases we start with equilibria in $\Fix(G_m^{m/2})$ and $\Integer_2$-symmetry is realised by $\gamma_m^{m/2}$. Consequently, we will set
  \begin{equation}\label{eq:R_n}
    R_m:=\gamma_m^{m/2},
  \end{equation}
  and the pitchfork bifurcation will result in two extra $\gamma_m^{m/2}$-conjugate equilibria in $\Fix(G_m^m)$. Likewise, we have that $X^+_m = \Fix(G_m^{m/2})$ and $X^-_m = \Fix(G_m^{m/2})^{\bot}$.

  For general dimensions $n = km$ we can extend these results according to Proposition~\ref{prop:invmanifold} which yields that the equilibria after the first pitchfork bifurcation (for which $m=2$) are $\gamma_{km}^{m/2} = \gamma_n^1$-conjugate and contained in $\Fix(G_n^2)$. Similarly, for the second pitchfork bifurcation we have $m=4$, so here the resulting equilibria will be pairwise $\gamma_{km}^{m/2}=\gamma_n^2$-conjugate and contained in $\Fix(G_n^4)$. \qei
\end{rmk}

In order to prove the existence of a pitchfork bifurcation in the 2-dimensional Lorenz-96 model, it suffices to show that it satisfies the second case of Theorem~\ref{thm:Kuznetsov}. This is in brief how the following lemma is proven:
\begin{lem}\label{lem:PF1n2}
  Let $n = 2$, then the equilibrium $x_F$ of the Lorenz-96 model exhibits a pitchfork bifurcation at the parameter value $F_{\PF,1}=-\tfrac{1}{2}$.
\end{lem}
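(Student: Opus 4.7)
The plan is to verify the hypotheses of Theorem~\ref{thm:Kuznetsov}(ii) directly for the two-dimensional system, which will immediately yield the pitchfork bifurcation. All of the work is therefore reduced to a short linear-algebra check on the Jacobian of the $n=2$ model at the candidate bifurcation point.

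First I would write out the $n=2$ equations explicitly using the boundary condition $x_{j\pm 2}=x_j$, translate the trivial equilibrium to the origin via $y=x-x_F$, and shift the parameter by setting $\alpha=F-F_{\PF,1}=F+\tfrac12$, so that the hypothesis $x_0=0$ at $\alpha=0$ of Theorem~\ref{thm:Kuznetsov} is in place. The $\Integer_2$-equivariance is already supplied by Proposition~\ref{prop:L96equivariant} together with Remark~\ref{rmk:Kuznetsov}: with $R_2:=\gamma_2=\bigl(\begin{smallmatrix}0&1\\1&0\end{smallmatrix}\bigr)$, we have $R_2^2=\Id_2$ and $R_2 f_2(y,\alpha)=f_2(R_2 y,\alpha)$. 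So the framework of Theorem~\ref{thm:Kuznetsov} applies verbatim.

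Next I would compute the Jacobian of $f_2$ at the origin; because the system is circulant the eigenvalue formula~\eqref{eq:Lzsimpleevn} specialises to $\lambda_0=-1$ and $\lambda_1=-1-2F=-2\alpha$. Hence at $\alpha=0$ the eigenvalue $\lambda_1$ is a simple zero eigenvalue, and its eigenvector is given by~\eqref{eq:Lzeigenvector} with $\rho_1=-1$, namely $v=\tfrac{1}{\sqrt 2}(1,-1)^\top$. A direct check shows $R_2 v=-v$, i.e.\ $v\in X_2^-$. Case~(ii) of Theorem~\ref{thm:Kuznetsov} then gives a one-dimensional $R_2$-invariant center manifold on which the dynamics is topologically conjugate to the pitchfork normal form $\dot\xi=\beta\xi\pm\xi^3$, which is precisely the assertion of the lemma.

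There is no real obstacle in the argument itself; it is essentially a bookkeeping exercise once Theorem~\ref{thm:Kuznetsov} is invoked. The one subtlety worth flagging is that the lemma as stated does not assert supercriticality, but Theorem~\ref{thm:PFBif} will. To upgrade the conclusion to a supercritical pitchfork one would perform an elementary center-manifold reduction: write the stable coordinate $u$ (along $(1,1)^\top$) as $u=h(v)=av^2+O(v^4)$, solve the invariance equation $h'(v)\dot v=\dot u$ at leading order to obtain $a$, and substitute back into $\dot v$ to read off the cubic coefficient; its sign (which I expect to be negative) determines supercriticality. I would signal this computation as an addendum rather than fold it into the statement, since the present lemma only requires the qualitative existence of the bifurcation.
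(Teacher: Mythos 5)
Your proposal is correct and follows essentially the same route as the paper's own proof: verify the hypotheses of Theorem~\ref{thm:Kuznetsov} for $n=2$ with $R_2=\gamma_2$, observe that $\lambda_1=-1-2F$ vanishes at $F=-\tfrac12$ with eigenvector satisfying $R_2v=-v$ (hence $v\in X_2^-$), and invoke case~(ii). Your closing remark that supercriticality is not asserted in the lemma and requires a separate center-manifold computation also matches the paper, which defers that calculation to a remark following Theorem~\ref{thm:PFBif}.
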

\begin{proof}
  The eigenvalues of $x_F$ are given by equation~\eqref{eq:Lzsimpleevn}, so that in the 2-dimensional case we have
  \begin{equation*}
    \lambda_0 = -1, \quad \lambda_1 = -1 - 2F.
  \end{equation*}
  Therefore, $\lambda_1 = 0$ at $F=F_{\PF,1}$ and a bifurcation takes place. An eigenvector at $F_{\PF,1}$ corresponding to $\lambda_1$ is given by
  \begin{equation*}
    v_1 = (-1, 1).
  \end{equation*}

  By Proposition~\ref{prop:L96equivariant} system~\eqref{eq:Lorenz96} with $n=2$ has a $\Integer_2$-equivariance with symmetry transformation
  \begin{equation}\label{eq:equivmatrix2}
    R_2 := \gamma_2 = \begin{pmatrix}
             0 & 1 \\
             1 & 0
           \end{pmatrix},
  \end{equation}
  as defined by formula~\eqref{eq:R_n}. From section~\ref{sec:symmetries} it follows that this matrix satisfies the following:
  \begin{enumerate}
    \item $R_2^2 = \Id_2$;
    \item $R_2$ defines a symmetry transformation on $\Real^2 = X^+_2 \oplus X^-_2$ with
        \begin{align*}
          X^+_2 &= \Fix(G_2^1),\\
          X^-_2 &= \Fix(G_2^1)^{\bot} = \{x\in\Real^2: x_0 = -x_1\}.
        \end{align*}
  \end{enumerate}
  With these preliminaries the conditions of Theorem~\ref{thm:Kuznetsov} are satisfied (up to a transformation to the origin). In addition, it is easy to see that we are in the pitchfork-case, since we have
  \begin{equation*}
    R_2 v_1 = -v_1,
  \end{equation*}
  i.e.\ the eigenvector with respect to $\lambda_1(F_{\PF,1})$ lies in $X^-_2$. Hence, by Theorem~\ref{thm:Kuznetsov} the 2-dimensional Lorenz-96 model has a 1-dimensional $R_2$-invariant center manifold $W^c_F$ with $W^c_F \cap X^+_2 = x_F$ for all $F$ sufficiently close to $F_{\PF,1}$ and the restriction of the system to $W^c_F$ is locally topologically equivalent near $x_F$ to the normal form of a pitchfork bifurcation.
\end{proof}

\begin{proof}[Proof of Theorem~\ref{thm:PFBif}]
  The result of Lemma~\ref{lem:PF1n2} extends to all dimensions $n = 2k, k \in \Nat$ by Proposition~\ref{prop:invmanifold}.
\end{proof}

\begin{rmk}
  Theorem~\ref{thm:PFBif} can also be proven via a center manifold reduction \cite{Guckenheimer83,Kuznetsov04,Wiggins03}, which gives the following form of system~\eqref{eq:Lorenz96eq} (up to linear transformations), restricted to its center manifold:
  \begin{equation}\label{eq:CMPF}
     \dot{u} = -2\alpha u-\frac{4}{n}u^3 + \mathcal{O}(\|u,\alpha\|^4),
  \end{equation}
  where $\alpha = F+\tfrac{1}{2}$. This is the normal form of the supercritical pitchfork bifurcation and implies that the equilibrium $x_F$ is stable for $F > F_{\PF,1}$ and loses stability at $F = F_{\PF,1}$, while two other stable equilibria exist for $F < F_{\PF,1}$. \qei
\end{rmk}

At the supercritical pitchfork bifurcation the equilibrium $x_F\in \Fix(G_n^1)$ loses stability and gives rise to two stable equilibria $\xi^1_j\in \Fix(G_n^2)$, $j=0,1$, that exist for $F<-\tfrac{1}{2}$. These new equilibria are given by
\begin{equation}\label{eq:eqxi1}
    \xi_0^1(F) = (a_+,a_-,\ldots,a_+,a_-), \quad a_{\pm} = -\tfrac{1}{2}\pm \tfrac{1}{2}\sqrt{-1-2F},
\end{equation}
while $\xi_1^1$ is obtained by swapping the indices $+$ and $-$. So, each $\xi_j^1$ has a structure like the equilibria $x^m$ in formula~\eqref{eq:solutionxm} with $m=2$ and they are indeed $\gamma_n$-conjugate as predicted by Remark~\ref{rmk:Kuznetsov}. In other words: applying the matrix $\gamma_n$ means geometrically a switch from one branch of equilibria to the other.

\subsection{Second pitchfork bifurcation}\label{sec:PF2}
The pitchfork bifurcation described in the previous section is followed by a second subsequent pitchfork bifurcation for $F<F_{\PF,1}$ when the dimension is a multiple of 4. This time, there are two simultaneous bifurcations, each of which takes place at a different branch of equilibria~\eqref{eq:eqxi1} that emanated from the first pitchfork bifurcation of Theorem~\ref{thm:PFBif}.

\begin{thm}[Second pitchfork bifurcation]\label{thm:PF2Bif}
  Let $n = 4k$ with $k \in \Nat$. Then both equilibria $\xi^1_{0,1}(F)$ emanating from the pitchfork bifurcation at $F_{\PF,1} = -\tfrac{1}{2}$ exhibit a supercritical pitchfork bifurcation at the parameter value $F_{\PF,2}:=-3$.
\end{thm}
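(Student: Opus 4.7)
The strategy exactly parallels the proof of Theorem~\ref{thm:PFBif}: by Proposition~\ref{prop:invmanifold} it suffices to establish the result in the lowest possible dimension, namely $n=4$, after which both bifurcations are transported to every $n=4k$ via the invariant manifold $\Fix(G_n^4)$. In dimension four the relevant $\Integer_2$-action is $R_4 := \gamma_4^2$ (formula~\eqref{eq:R_n} with $m=4$), and the equilibrium $\xi_0^1=(a_+,a_-,a_+,a_-)$ sits in $X^+_4 = \Fix(G_4^2)$. After translating $\xi_0^1$ to the origin, the resulting system is $R_4$-equivariant with $R_4^2=\Id_4$, so Theorem~\ref{thm:Kuznetsov} becomes applicable.

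The equivariance block-diagonalises the Jacobian $Df_4(\xi_0^1,F)$ along the splitting $\Real^4 = X^+_4 \oplus X^-_4$. By Proposition~\ref{prop:invmanifold}, the $X^+_4$-block coincides with the Jacobian of the $2$-dimensional Lorenz-96 model at its post-pitchfork equilibrium $(a_+,a_-)$; a short calculation gives trace $-1$ and determinant $-2-4F$, so this block has no zero eigenvalue for any $F<-\tfrac{1}{2}$, and in particular at $F=-3$. A second short computation shows that the $X^-_4$-block has characteristic polynomial $\lambda^2+3\lambda+(3+F)$, which possesses a simple zero eigenvalue precisely at $F = F_{\PF,2} = -3$, with the corresponding eigenvector lying in $X^-_4$. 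Hence case~(ii) of Theorem~\ref{thm:Kuznetsov} applies and $\xi_0^1$ undergoes a pitchfork bifurcation at $F=-3$.

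To upgrade this to a \emph{supercritical} pitchfork, I would perform the center manifold reduction at $(\xi_0^1, F_{\PF,2})$, parametrising the $1$-dimensional $R_4$-invariant center manifold by the coordinate $\xi$ along the kernel eigenvector in $X^-_4$ and expanding the reduced vector field up to cubic order. Verifying that the resulting normal form is $\dot{\xi} = \beta\xi - c\xi^3$ with $c>0$ (in analogy with the computation leading to~\eqref{eq:CMPF} for the first pitchfork) then delivers supercriticality. The simultaneous pitchfork on the conjugate branch $\xi_1^1=\gamma_4\xi_0^1$ follows immediately from equivariance, and Proposition~\ref{prop:invmanifold} extends the whole picture to every $n=4k$; the four resulting equilibria are pairwise $\gamma_n^2$-conjugate and lie in $\Fix(G_n^4)$, in accordance with Remark~\ref{rmk:Kuznetsov}.

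The main obstacle is the supercriticality check. The linear part of the argument reduces, via the $R_4$-block decomposition, to diagonalising two explicit $2\times 2$ matrices and is essentially mechanical. The cubic coefficient in the center-manifold normal form, however, requires first solving for the quadratic correction to the center manifold inside $X^+_4$ (where the hyperbolic block is non-diagonal), and then substituting back into the quadratic nonlinearities of $f_4$ to extract the cubic term; tracking the interaction between these quadratic corrections and the nonlinear part of the Lorenz-96 vector field is the delicate bookkeeping step.
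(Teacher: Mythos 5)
Your proposal follows essentially the same route as the paper: reduce to $n=4$, apply Theorem~\ref{thm:Kuznetsov} with $R_4=\gamma_4^2$, check that the critical eigenvector lies in $X^-_4$, and lift to all $n=4k$ via Proposition~\ref{prop:invmanifold}. Your block-diagonalisation of the Jacobian along $X^+_4\oplus X^-_4$ is a slightly more structural way of obtaining exactly the eigenvalues the paper lists in~\eqref{eq:evxi1} (your $X^+_4$-block has trace $-1$ and determinant $-2-4F$, giving $\lambda^1_{0,1}$, and your $X^-_4$-polynomial $\lambda^2+3\lambda+(3+F)$ gives $\lambda^1_{2,3}$), so the linear part is correct. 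The one piece you outline but do not execute --- the sign of the cubic coefficient in the reduced normal form --- is precisely what the paper supplies in the remark after Lemma~\ref{lem:PF2n4}, where the explicit center-manifold computation yields $b(\alpha)<0$ near $\alpha=0$ and hence supercriticality; carrying out that computation is the only step separating your sketch from a complete proof.
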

The proof goes in exactly the same way as the proof for the first pitchfork bifurcation. Again, we first prove a lemma that describes the occurrence of a second pitchfork bifurcation in the lowest possible dimension:
\begin{lem}\label{lem:PF2n4}
  Let $n = 4$, then the equilibria $\xi^1_{0,1}(F)$ emanating from the pitchfork bifurcation at $F_{\PF,1} = -\tfrac{1}{2}$ both exhibit a pitchfork bifurcation at the parameter value $F_{\PF,2} := -3$.
\end{lem}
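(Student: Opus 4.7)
The plan is to mimic the proof of Lemma~\ref{lem:PF1n2} by verifying that the hypotheses of Theorem~\ref{thm:Kuznetsov} hold for the $4$-dimensional Lorenz-96 model at $\xi_0^1(F)$ with the $\Integer_2$-symmetry generated by
\[
R_4 := \gamma_4^2,
\]
as prescribed in formula~\eqref{eq:R_n}. The equilibria $\xi_0^1$ and $\xi_1^1$ are of the form~\eqref{eq:solutionxm} with $m=2$, so each is fixed by $R_4$; after translating one of them to the origin the resulting system is $R_4$-equivariant and $R_4^2=\Id_4$ is immediate. Moreover $\xi_1^1=\gamma_4\,\xi_0^1$ and $\gamma_4$ commutes with $R_4$, so the Jacobians at $\xi_0^1$ and $\xi_1^1$ are similar and the two equilibria bifurcate simultaneously; it therefore suffices to analyse $\xi_0^1$.

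To locate the bifurcation, decompose $\Real^4 = X_4^+\oplus X_4^-$ with $X_4^+=\Fix(G_4^2)$ and $X_4^-=\Fix(G_4^2)^{\bot}$. Since the Jacobian $J(F)$ at $\xi_0^1(F)$ commutes with $R_4$, it is block diagonal with respect to this splitting, so one may compute the two $2\times 2$ blocks separately. By Proposition~\ref{prop:invmanifold} the $X_4^+$-block coincides with the Jacobian of the $2$-dimensional Lorenz-96 model at its post-PF1 equilibrium $(a_+,a_-)$; a short computation yields trace $-1$ and determinant $-2-4F$, so this block has no zero eigenvalue at $F=-3$. For the $X_4^-$-block, using antisymmetric coordinates $y_1=x_1-x_3$, $y_2=x_2-x_4$, linearising~\eqref{eq:Lorenz96eq} at $\xi_0^1$ gives
\[
B(F) = \begin{pmatrix} a_- - 1 & a_+ \\ -a_- & a_+ - 1 \end{pmatrix},
\]
whose trace is $a_++a_--2=-3$ and whose determinant is $2a_+a_-+2=F+3$, using $a_++a_-=-1$ and $a_+a_-=(1+F)/2$ from~\eqref{eq:eqxi1}.

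Consequently, at $F=F_{\PF,2}=-3$ the $X_4^-$-block has eigenvalues $0$ and $-3$ while the $X_4^+$-block remains non-singular, so the full Jacobian has a simple zero eigenvalue whose eigenvector lies in $X_4^-$. A direct differentiation of the characteristic polynomial of $B(F)$ gives $d\lambda/dF=-1/3\neq 0$ at $F=-3$, so the zero eigenvalue crosses transversally. Theorem~\ref{thm:Kuznetsov}(ii) then applies and yields a pitchfork bifurcation of $\xi_0^1$ at $F=-3$; by the conjugacy noted above the same holds for $\xi_1^1$. The main technical step is the explicit reduction of $J(F)$ to the $2\times 2$ block $B(F)$ on $X_4^-$, which requires careful bookkeeping of the cyclic indices in~\eqref{eq:Lorenz96eq} and of the two sign patterns $a_\pm$; once $B(F)$ is written down, the remaining verifications (trace, determinant, eigenvector location, transversality) are immediate.
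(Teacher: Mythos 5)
Your proposal is correct and follows essentially the same route as the paper: both apply Theorem~\ref{thm:Kuznetsov}(ii) with $R_4=\gamma_4^2$ after locating a simple zero eigenvalue at $F_{\PF,2}=-3$ whose eigenvector lies in $X_4^-$. The only difference is bookkeeping: the paper lists the four eigenvalues and an explicit eigenvector $v_2^1$ satisfying $R_4v_2^1=-v_2^1$, whereas you block-diagonalise the Jacobian over $X_4^+\oplus X_4^-$ (your blocks reproduce exactly the paper's eigenvalues $\tfrac{1}{2}(-1\pm\sqrt{9+16F})$ and $\tfrac{1}{2}(-3\pm\sqrt{-3-4F})$), and your explicit transversality check is a small bonus that the paper leaves to the genericity clause of Theorem~\ref{thm:Kuznetsov}.
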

\begin{proof}
  The eigenvalues of both equilibria $\xi^1_j$, $j=0,1$ are given by:
  \begin{align}\label{eq:evxi1}
    \lambda^1_{0,1} &= \tfrac{1}{2}(-1 \pm \sqrt{9 + 16 F}),\nonumber\\
    \lambda^1_{2,3} &= \tfrac{1}{2}(-3 \pm \sqrt{-3 - 4 F}).
  \end{align}
  Since $\lambda_2^1=0$ when $F=F_{\PF,2}$, a bifurcation takes place. An eigenvector corresponding to $\lambda_2^1(F_{\PF,2})$ is given by
  \begin{equation*}
    v^1_2 = (2 + \sqrt{5}, -1, -2 - \sqrt{5}, 1).
  \end{equation*}

  To show that system~\eqref{eq:Lorenz96} with $n=4$ is $\Integer_2$-equivariant we recall that
  \begin{equation}\label{eq:equivmatrix}
    R_4 := \gamma_4^2 = \begin{pmatrix}
             0 & 0 & 1 & 0 \\
             0 & 0 & 0 & 1 \\
             1 & 0 & 0 & 0 \\
             0 & 1 & 0 & 0
           \end{pmatrix},
  \end{equation}
  as defined by formula~\eqref{eq:R_n}. Again, this matrix satisfies the requirements of Theorem~\ref{thm:Kuznetsov}, since
  \begin{enumerate}
    \item $R_4^2 = \Id_4$;
    \item $R_4 f_4(x,F) = f_4(R_4 x,F)$, by Proposition~\ref{prop:L96equivariant};
    \item $R_4$ defines a symmetry transformation on $\Real^4 = X^+_4 \oplus X^-_4$, where
        \begin{align*}
          X^+_4 &= \Fix(G_4^2),\\
          X^-_4 &= \Fix(G_4^2)^{\bot} = \{x\in\Real^4: x_0 = -x_2, x_1 = -x_3\}.
        \end{align*}
  \end{enumerate}
  Note that the group $\{\Id_4,R_4\}$ has $\Fix(G_n^2)$ as its fixed-point subspace, so it contains all the symmetries of $\xi^1\in \Fix(G_n^2)$ (i.e.\ $R_4 \xi^1 = \xi^1$). In contrast, it holds that
  \begin{equation*}
    R_4 v^1_2 = -v^1_2,
  \end{equation*}
  i.e.\ the eigenvector with respect to $\lambda^1_2(F_{\PF,2})$ lies in $X^-_4$. By Theorem~\ref{thm:Kuznetsov} this implies that a pitchfork bifurcation takes place and the 4-dimensional Lorenz-96 model has a 1-dimensional $R_4$-invariant center manifold $W^c_F$ with $W^c_F \cap X^+_4 = \xi^1_j$ for all $F$ sufficiently close to $F_{\PF,2}$.
\end{proof}
\begin{rmk}
  Lemma~\ref{lem:PF2n4} can be proven by a center manifold reduction, like Theorem~\ref{thm:PFBif} for dimensions $n=2k$. For $n=4$, this gives
  the following normal form of a pitchfork bifurcation:
  \begin{equation*}
    \dot{u} = a(\alpha) u + b(\alpha) u^3,
  \end{equation*}
  with
  \begin{align*}
    \hspace{-2.5em} a(\alpha) &= \frac{\alpha (18\sqrt{5}\sqrt{5 - 2\alpha} + \alpha)}{54 (-5 + 2\alpha)},\\
    \hspace{-2.5em} b(\alpha) &= \frac{450 (145 + 61\sqrt{5}) + \alpha(\sqrt{5 - 2\alpha}(854 + 406 \sqrt{5}) - 180 (145 + 61\sqrt{5}))}{135 (23 + 3\sqrt{5}) (-5 + 2\alpha)}.
  \end{align*}
  where $\alpha = F - F_{\PF,2} = F + 3$. The function $b(\alpha)$ is negative for values of $\alpha$ around 0, hence both pitchfork bifurcations at $F_{\PF,2}$ for $n=4$ are supercritical.
  \qei
\end{rmk}

\begin{proof}[Proof of Theorem~\ref{thm:PF2Bif}]
  The result of Lemma~\ref{lem:PF2n4} extends to all dimensions $n=4k$, $k\in\Nat$ by Proposition~\ref{prop:invmanifold}.
\end{proof}

\begin{rmk}\label{rmk:dimn=4m-2}
  A generalisation to all $n=4k-2$ is not provided by Proposition~\ref{prop:invmanifold}. Indeed, the second pair of eigenvalues $\lambda^1_{2,3}$ of~\eqref{eq:eqxi1} occurs only in the form of equation~\eqref{eq:evxi1} when the dimension is of the form $n=4k$. If instead the dimension equals $n=2$ then there are no more eigenvalues that can cross the imaginary axis, whereas for $n=4k-2$, $k\geq 2$, the eigenvalue pairs are different from the case $n=4k$, as numerical computations show \cite{Kekem17c1}. Therefore, in dimensions $n=4k-2$, $k\in\Nat$, there will not be an additional pitchfork bifurcation, but the next bifurcation after the first pitchfork bifurcation will be a Hopf bifurcation, as we will see in section~\ref{sec:Hbif}.
  \qei
\end{rmk}

At the second pitchfork bifurcations the equilibria $\xi^1_j\in \Fix(G_n^2)$, with $j=0,1$, lose stability and four stable equilibria $\xi^2_j\in \Fix(G_n^4)$, $0\leq j\leq 3$ appear that exist for $F<-3$. In contrast with $\xi^1_j$ it is not feasible to derive analytic expressions for the equilibria $\xi^2_j$. By Remark~\ref{rmk:Kuznetsov}, we know that in the 4-dimensional case these new equilibria are pairwise $R_4$-conjugate in the following way: $\xi^2_j = R_4 \xi^2_{j+2}$ (with the index modulo 4); that is, the equilibria with index $j$ and $j+2$ emanate from the same equilibrium $\xi^1_j$ for $j=0,1$. By equivariance, the conjugate solutions $\gamma_4 \xi^2_j$ are equilibria as well for all $0\leq j \leq 3$. In fact, we observe numerically that this gives precisely the solutions from the other $R_4$-conjugate pair of solutions (see section~\ref{sec:PFcascade}), i.e.\ we can switch between all four equilibria by subsequently applying $\gamma_4$, e.g.~$\xi^2_j = \gamma_4^j \xi^2_0$.

For general dimensions $n=4k$ similar statements hold: the new equilibria satisfy $\xi^2_j = \gamma_n^2 \xi^2_{j+2}$ (with the index modulo $n$) and they are of the form~\eqref{eq:solutionxm} with $m=4$. This gives an extra argument why a symmetry breaking by a pitchfork bifurcation is not possible in dimensions $n=4k-2$: since $n$ is not divisible by 4, we cannot `fill' the $n$ coordinates of an equilibrium in $\Real^n$ completely by blocks of four and the invariant subspace $\Fix(G_n^4)$ does not exist.

%----------------------------------------------------------------------------
\section{Numerical results}\label{sec:NumericalResults}
In specific dimensions of the Lorenz-96 model we observed more than two subsequent pitchfork bifurcations with a nice structure. In this section we will give a more detailed exposition on this symmetric dynamical structure. We mainly concentrate on the bifurcation pattern for $F<0$ by describing the codimension 1 bifurcations of the equilibria that are generated via one or more pitchfork bifurcations.

Firstly, we discuss the occurrence of a supercritical Hopf bifurcation for $F<0$, which is preceded by at most two pitchfork bifurcations. Since these results are already presented in \cite{Kekem17a1} and \cite{Kekem17c1} together with the spatiotemporal properties of the resulting periodic orbit, we focus here on their symmetrical properties. By analysing the dimension of their containing invariant subspace we can clarify the existence (and non-existence) of patterns in the dynamics.

Secondly, we show that in specific dimensions it is possible to have more subsequent pitchfork bifurcations after the Hopf bifurcation and the two proven pitchfork bifurcations. We discuss how many of them can be expected in each dimension. These consecutive pitchfork bifurcations then also generate a lot of unstable equilibria that may influence the dynamics.

Most of these results follow from numerical observations. We will interpret these observations by means of the theoretical exposition of the symmetry in section~\ref{sec:symmetries} without aiming to be complete. Especially, proving facts after many pitchfork bifurcations will become increasingly difficult, since the lowest dimension needed increases exponentially.

\subsection{Symmetric periodic orbits}\label{sec:Hbif}
\paragraph{Destabilising Hopf bifurcations}Recall that for $n=2$ only one pitchfork bifurcation is possible and no further bifurcation can happen. Moreover, in dimensions $n=1$ and 3 all eigenvalues are equal to $-1$, so that no bifurcation is possible at all. Apart from that, we show below that in any dimension $n\geq4$ the stable equilibria for negative parameter values $F$ eventually lose stability through a supercritical Hopf bifurcation and one or more stable periodic orbits will appear.

In \cite{Kekem17c1} we have shown that this will happen after at most two subsequent pitchfork bifurcations. Therefore, the bifurcation pattern can be divided into three different cases according to the number of pitchfork bifurcations that occur before the Hopf bifurcation:

\begin{description}
  \item[Case 1: no pitchfork bifurcations] For odd $n$, no pitchfork bifurcation will occur, but the first bifurcation of the trivial equilibrium~\eqref{eq:Lztriveq} for $F<0$ is a Hopf bifurcation at $F_{\Hf} (j,n) := 1/(\cos \tfrac{2\pi j}{n} - \cos \tfrac{4\pi j}{n})$ with $j=\tfrac{n-1}{2}$. In \cite{Kekem17a1}, we have proven that this first Hopf bifurcation is supercritical, which implies that the stable equilibrium $x_F$ loses stability and a stable periodic orbit appears after the bifurcation; see Figure~\ref{fig:Lz96-PFStructureFigOdd}.
  \item[Case 2: one pitchfork bifurcation] For $n=4k+2$, $k\in \Nat$, Remark~\ref{rmk:dimn=4m-2} states that only one pitchfork bifurcation occurs in this case, whose existence is proven by Theorem~\ref{thm:PFBif}. In \cite{Kekem17c1}, we have demonstrated numerically that both equilibria exhibit a supercritical Hopf bifurcation simultaneously. Hence, for parameter values $F$ below the corresponding Hopf bifurcation value $F_{\Hf}'$ the two equilibria $\xi^1_j$, $j=0,1$, are unstable and two stable periodic orbits coexist; see Figure~\ref{fig:Lz96-PFStructureFig1PF}.
  \item[Case 3: two pitchfork bifurcations] For $n=4k$, $k\in \Nat$, Theorems~\ref{thm:PFBif} and~\ref{thm:PF2Bif} guarantee the occurrence of two pitchfork bifurcations subsequently. By numerical continuation we observed that the resulting four stable equilibria exhibit supercritical Hopf bifurcations simultaneously \cite{Kekem17c1}. Thus, in this case four stable periodic orbits coexist for parameter values $F<F_{\Hf}''$; see Figure~\ref{fig:Lz96-PFStructureFig4m}.
\end{description}

\begin{figure}[ht!]
  \centering
   \makebox[\textwidth][c]{\includegraphics{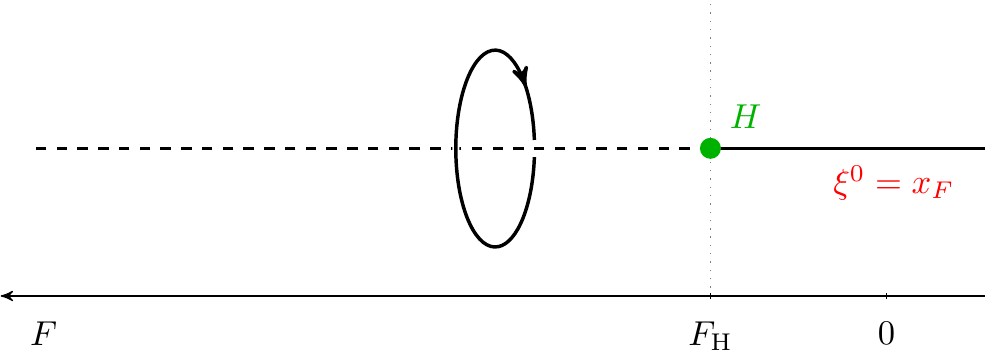}}\\
   \caption{Schematic representation of the attractors for negative $F$ in an $n$-dimensional Lorenz-96 model with odd $n>3$, so without any pitchfork bifurcation. The label $H$ stands for a (supercritical) Hopf bifurcation and occurs for $-0.894427\leq F_{\Hf}<-\tfrac{1}{2}$. The only equilibrium is given by $\xi^0 \equiv x_F\in \Fix(G_n^1)$. A solid line represents a stable attractor; a dashed line represents an unstable one.}\label{fig:Lz96-PFStructureFigOdd}
\end{figure}
\begin{figure}[ht!]
  \centering
   \makebox[\textwidth][c]{\includegraphics{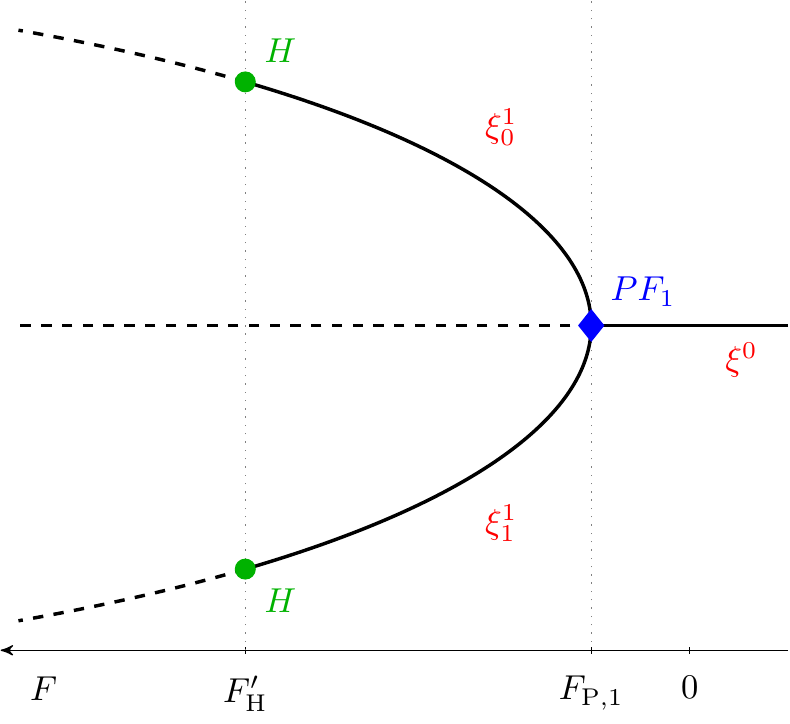}}\\
   \caption{Schematic bifurcation diagram of a $2^1 p$-dimensional Lorenz-96 model with $p>1$ odd and for negative $F$. The label $PF_1$ denotes the only (supercritical) pitchfork bifurcation with bifurcation value $F_{\PF,1}=-\tfrac{1}{2}$; $H$ stands for a (supercritical) Hopf bifurcation with bifurcation value $-3.5\leq F_{\Hf}'\leq -3$, depending on $n$. The equilibria are $\xi^0 \equiv x_F\in V^0$ and $\xi^1_j\in V^1$, $j=0,1$ given by equation~\eqref{eq:eqxi1}. A solid line represents a stable equilibrium; a dashed line represents an unstable one.}\label{fig:Lz96-PFStructureFig1PF}
\end{figure}
\begin{figure}[ht!]
  \centering
   \makebox[\textwidth][c]{\includegraphics{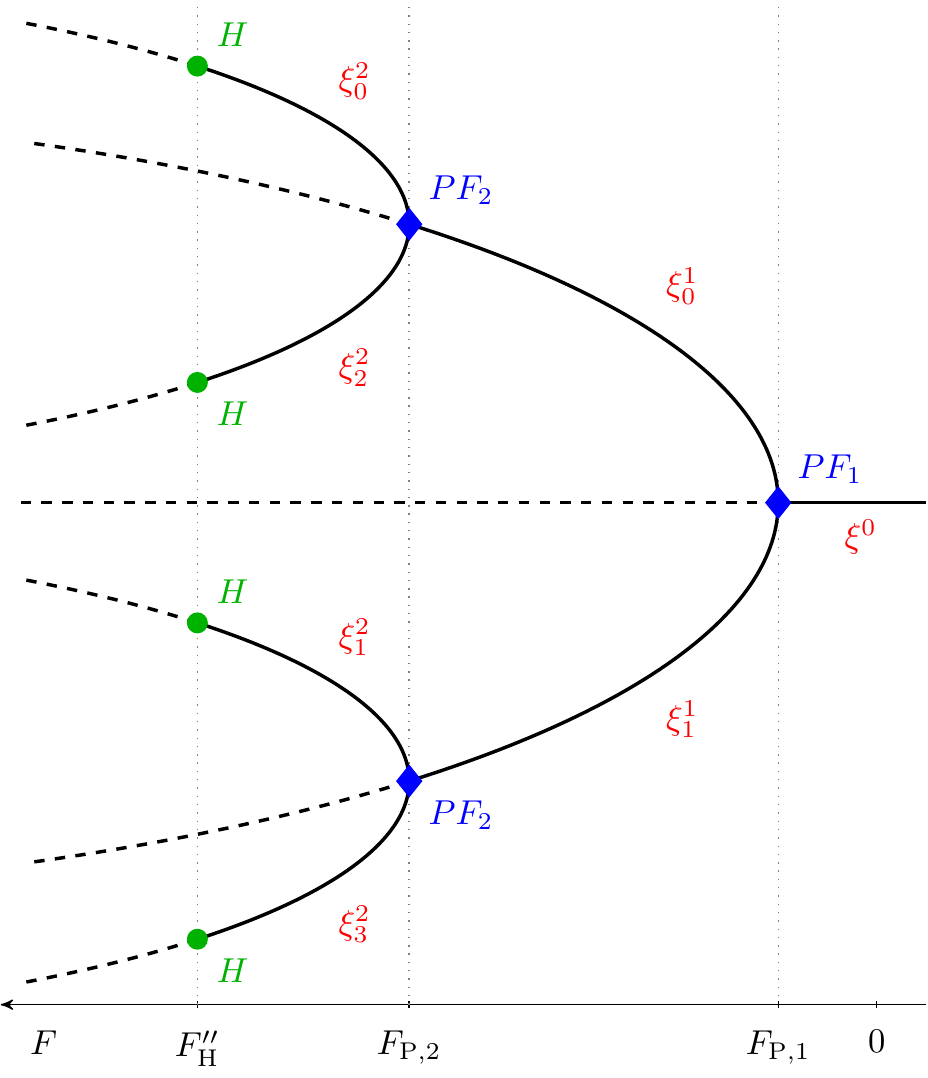}}\\
   \caption{Schematic bifurcation diagram of a $2^2 p$-dimensional Lorenz-96 model with $p>1$ odd and for negative $F$. The label $PF_1$, resp.~$PF_2$, denotes the first, resp.~second, (supercritical) pitchfork bifurcation with bifurcation value $F_{\PF,1}=-\tfrac{1}{2}$, resp.~$F_{\PF,2}=-3$; $H$ stands for a (supercritical) Hopf bifurcation with bifurcation value $-3.9< F_{\Hf}''< -3.5$, depending on $n$. The equilibria are given by $\xi^0 = x_F\in \Fix(G_n^1)$ and by equation~\eqref{eq:eqxi1} for $\xi^1_j\in \Fix(G_n^2)$, $j=0,1$, while $\xi^2_j\in \Fix(G_n^4)$, $j=0,\ldots,3$. A solid line represents a stable equilibrium; a dashed line represents an unstable one.}\label{fig:Lz96-PFStructureFig4m}
\end{figure}

\paragraph{Symmetry for periodic solutions}Recall from section~\ref{sec:symmetries} that
whenever $x(t)$ is a solution of the Lorenz-96 model, then $\gamma_n^j x(t)$ is a solution as well for any $1\leq j\leq n$. This also holds for a periodic solution $P(t)$ of system~\eqref{eq:Lorenz96}. The orbits of $P(t)$ and $\gamma_n^j P(t)$ are either identical or disjoint by uniqueness of solutions. In the first case both orbits differ at most by a phase shift in time; in the second case we obtain a new periodic solution $\gamma_n^j P(t)$ but whose spatiotemporal properties (i.e.\ the period and wave number) are the same as that of $P(t)$ \cite{Golubitsky88}.

In the Lorenz-96 model we observed numerically that the two or four periodic orbits, generated through the Hopf bifurcations after one or two pitchfork bifurcations, are indeed $\gamma_n$-conjugate to each other. Because they all emerge from a different equilibrium, their orbits must be disjoint, but they share the same spatiotemporal properties. Hence, due to the symmetry the periodic orbits for $F<0$ are related to each other by conjugacy as follows: Hopf bifurcation for negative $F$:
\begin{description}
  \item[Case 1 ($n$ odd)] There is only one periodic orbit $P(t)$ that satisfies $\gamma_n P(t) = P(t+jT/n)$, where $T$ is the period and $1\leq j < n$; i.e.~applying $\gamma_n$ results in a phase shift proportional to $T/n$ such that after $n$ iterations we retrieve the orbit without phase shift.
  \item[Case 2 ($n=4k+2$, $k\in\Nat$)] The two disjoint periodic orbits are $\gamma_n$-conjugate. Applying $\gamma_n$ twice returns the original periodic orbit but with a phase shift equal to $2jT/n$, where $1\leq j <n$. See Figure~\ref{fig:Lorenz96n6LCtimeF-36} for an example of the smallest dimension, $n=6$.
  \item[Case 3 ($n=4k$)] Four different periodic orbits exist of which three can be obtained from one by applying $\gamma_n$ subsequently one, two or three times as in dimension 4. Moreover, when we apply $\gamma_n$ four times, then the original periodic orbit reappears with a phase shift equal to $4jT/n$, $1\leq j < n$. See Figure~\ref{fig:Lorenz96n4LCtimeF-40} for an example of the smallest dimension, $n=4$.
\end{description}
More details about the spatiotemporal properties of the periodic attractors after the Hopf bifurcation in each of the cases listed above can be found in \cite{Kekem17c1}.

\begin{figure}[ht!]
  \centering
  \includegraphics[width=0.49\textwidth]{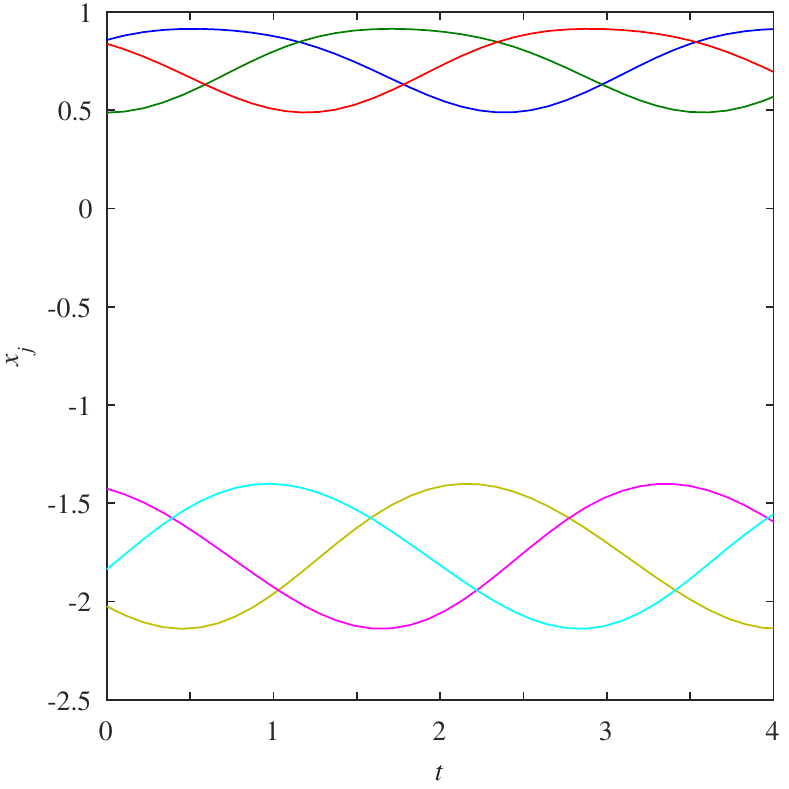}
  \includegraphics[width=0.49\textwidth]{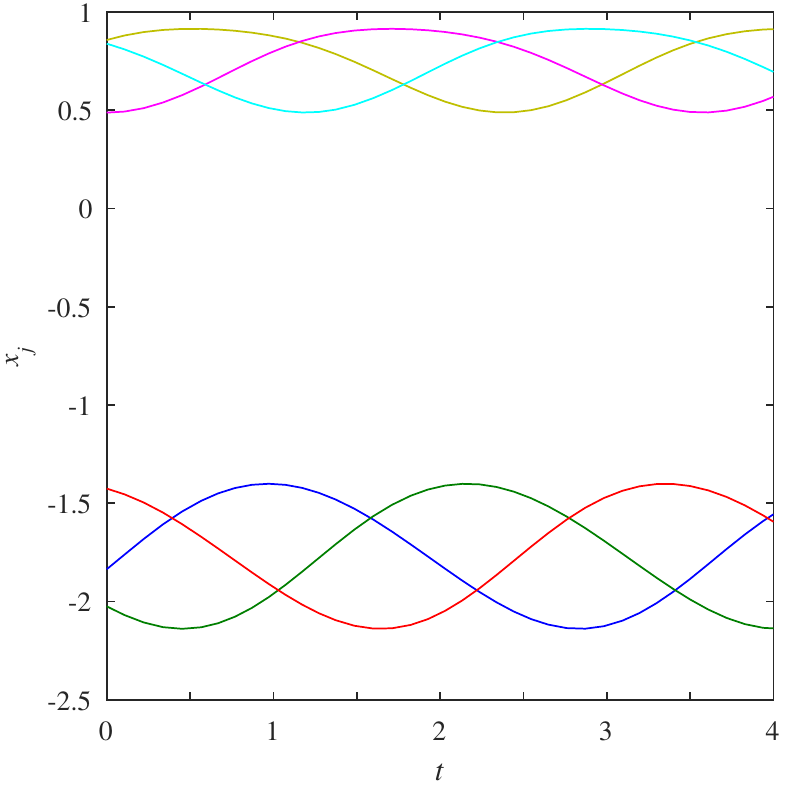}\\
  \caption{Time series of all coordinates $x_j$ of the two different periodic attractors for $n=6$ and $F=-3.6$, i.e.\ after the Hopf bifurcation following the first and only pitchfork bifurcation. The coordinates $x_j$, with $j=1,\ldots, 6$ are coloured blue, light-blue, red, purple, dark-green and yellow-green, respectively. The similarities between both periodic attractors are clear. A comparison of their coordinates shows that those of the right figure are shifted one place to the left with respect to the left one, which implies that the periodic orbits are $\gamma_6$-conjugate.}\label{fig:Lorenz96n6LCtimeF-36}
\end{figure}
\begin{figure}[ht!]
  \centering
  \includegraphics[width=0.49\textwidth]{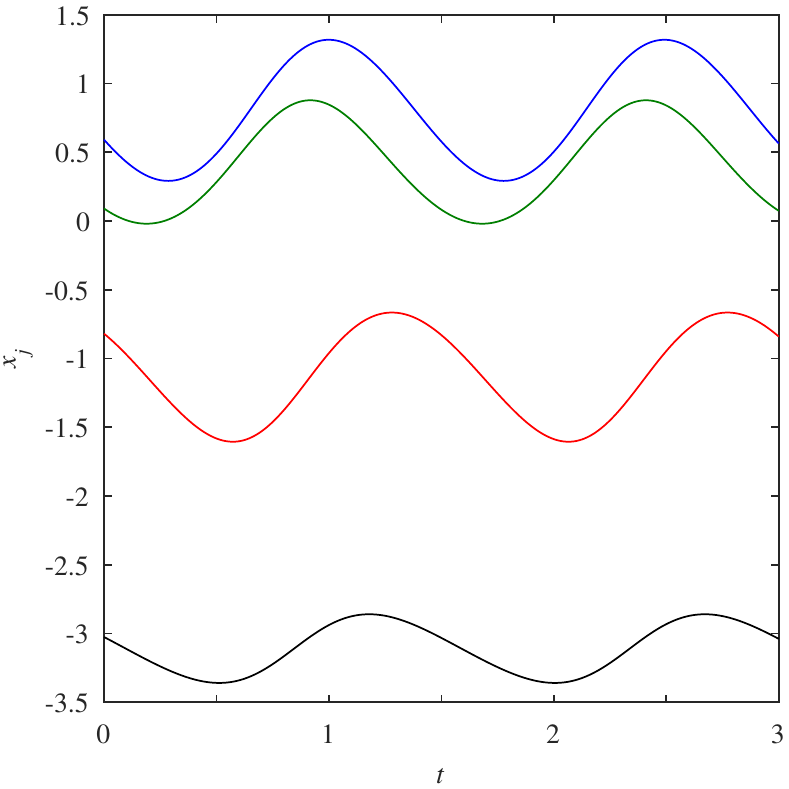}
  \includegraphics[width=0.49\textwidth]{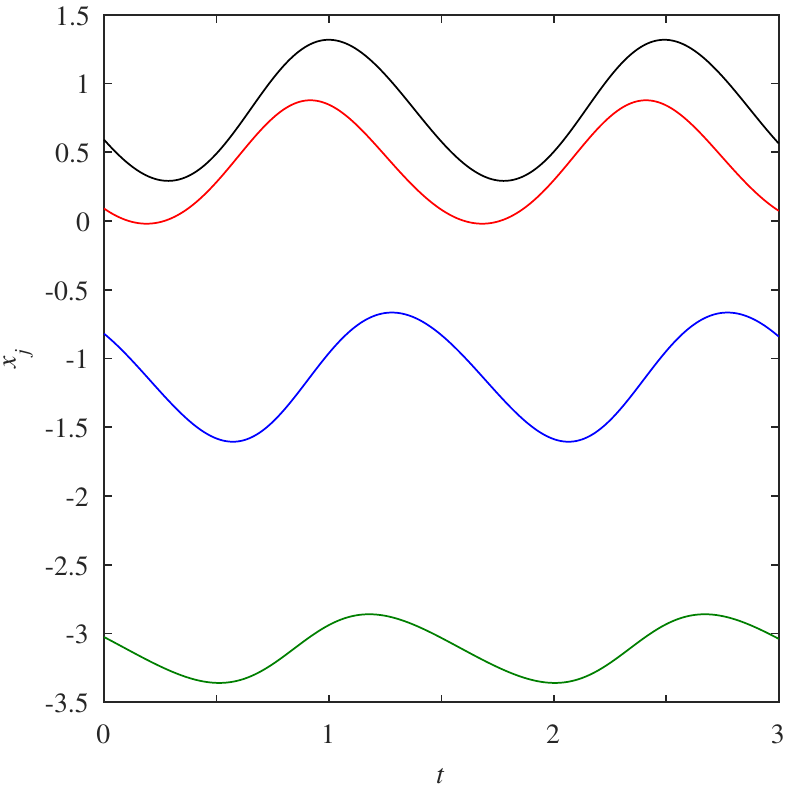}\\
  \caption{Time series of all coordinates $x_j$ of two periodic attractors for $n=4$ and $F=-4.0$, i.e.\ after the Hopf bifurcation following the second pitchfork bifurcation. The coordinates $x_j$, with $j=1,2,3,4$ are coloured blue, red, green and black, respectively. Observe that the periodic orbits are disjoint and $\gamma_4$-conjugate to each other, which means that they originate from the branches $\xi^2_k$ (left) and $\xi^2_{k+1}$ (right) with $k\in \{0,1,2,3\}$. The disjoint periodic orbits from the two other branches, $\xi^2_{k+2}$ and $\xi^2_{k+3}$, are obtained similarly, i.e.~by applying $\gamma_4$ two and three times to the periodic orbit in the left figure, in agreement with the analytical results (section~\ref{sec:PF2}).}\label{fig:Lorenz96n4LCtimeF-40}
\end{figure}

In order to check the symmetry of these periodic orbits, we perform the following numerical experiment. For a given dimension $n$ we follow the stable attractor for increasing or decreasing~$F$. We fix the value of the parameter $F$ and integrate the system long enough to obtain an attractor. After that, we check for repetition of the coordinates of the attractor. The number of different coordinates is then the dimension of the invariant subspace that contains the stable attractor. Finally, we raise or lower $F$ with a small step.

Using this method, we observe that, in general, the periodic orbits do not belong to any fixed-point subspace other than $\Fix(\Id_n)=\Real^n$, for dimensions up to 100. This might be due to the fact that the Hopf bifurcation values $F_{\Hf}'$ and $F_{\Hf}''$ are different for each dimension \cite{Kekem17c1}, which leads to different periodic orbits that do not inherit their properties from a lower dimension. However, in dimensions that are multiples of 6 we observe a tendency for periodic attractors in $\Fix(G^6_n)$ to become stable after a while; see Figure~\ref{fig:Symmn6}. This is observed in both dimensions of the form $n=4k$ and $n=4k+2$, so it could be the case that (even in dimensions $n=4k$) this symmetric attractor originates (via a Hopf bifurcation) from the equilibria directly after the first pitchfork bifurcation.
\begin{figure}[ht!]
  \includegraphics[width=1.02\textwidth]{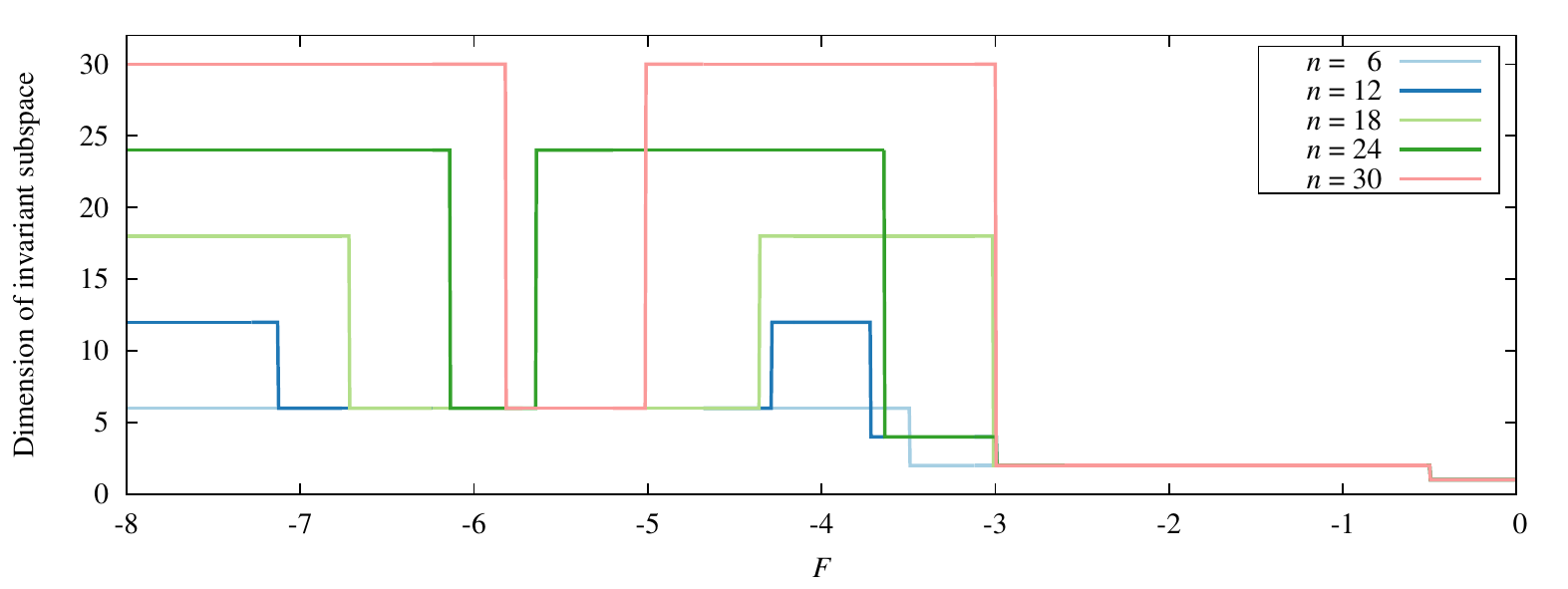}
  \caption{Plot of the dimension $m$ of the invariant subspace $\Fix(G^m_n)$ that contains the global attractor for various dimensions $n=6k$, $k=1,\ldots,5$, and negative $F$ (see text for the description of the method). In any dimension, after one or two pitchfork bifurcations a periodic orbit is generated with no symmetry (i.e.~contained only in $\Fix(G_n^n)$). For slightly smaller $F$, a symmetric attractor gains stability, which is any case contained in $\Fix(G_n^6)$.}\label{fig:Symmn6}
\end{figure}

\paragraph{Positive $F$}The trivial equilibrium $x_F$ is of the form $x^1 \in \Fix(G^1_n)$ and exists in any dimensions and for all $F\in\Real$. However, for positive forcing the first bifurcation for this equilibrium is not induced by symmetry, but it is either a supercritical Hopf bifurcation or a double-Hopf bifurcation, as we have shown in \cite{Kekem17a1}. This bifurcation happens at $F_{\Hf} (l_1,n) := 1/(\cos \tfrac{2\pi l_1}{n} - \cos \tfrac{4\pi l_1}{n})$, where $l_1$ denotes the index of the first eigenpair~\ref{eq:Lzevconj} crossing the imaginary axis, and results in one or more stable periodic orbits. Note that the index $l_1$ (which also represents the wave number of the periodic orbit \cite{Kekem17a1}) varies with the dimension. This results in a lot of different periodic orbits that can have various or no symmetry and their own route to chaos. Below, we will give a condition for which a periodic orbit has symmetry.

We investigated a few particular cases where it is observed that the periodic orbit is symmetric, using the same numerical experiment as above. For instance, for dimensions $n=5k$, $k=1,\ldots, 10$ a pattern of attractors is observed that are all invariant under $\gamma_n^5$ \cite{Kekem17a1}, which implies that they are contained in $\Fix(G^5_n)$ and inherit their properties partly from the attractor of $n=5$. This is confirmed by the plots in Figure~\ref{fig:Symmn5}, that shows the symmetry of the periodic orbits for dimensions $n=5k$, $k=1, \ldots, 12$. It can be seen that for $n=55$ and $60$ a symmetric attractor in $\Fix(G^5_n)$ becomes stable after a non-symmetric attractor has disappeared.
\begin{figure}[ht!]
  \includegraphics[width=1.02\textwidth]{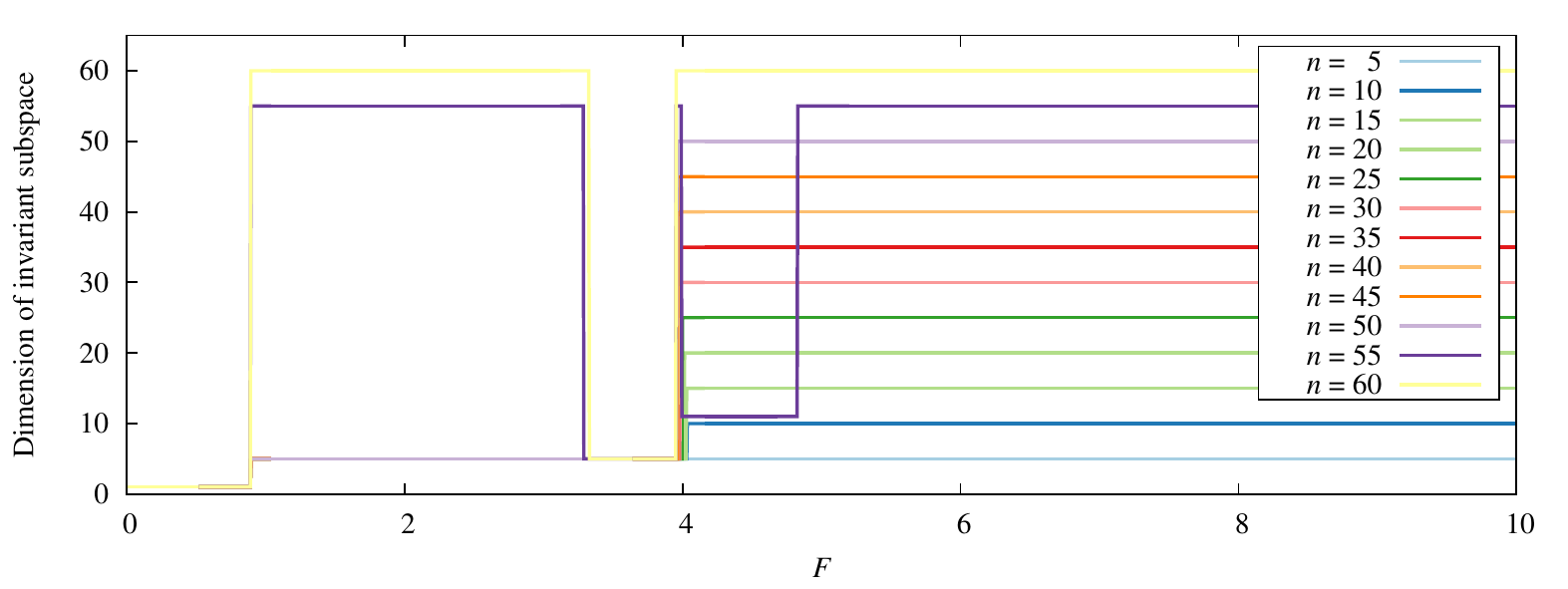}
  \caption{As Figure~\ref{fig:Symmn6}, but with dimensions $n=5k$, $k=1,\ldots,12$, and positive $F$. In any dimension up to $n=50$ an attractor is generated through a Hopf bifurcation which is contained in $\Fix(G^5_n)$. For $n=55$ and $60$ first an attractor without symmetry dominates, but for some larger values of $F$ an attractor in $\Fix(G^5_n)$ becomes globally stable again.}\label{fig:Symmn5}
\end{figure}

Similarly, in e.g.~$n=8$, resp.\ $n=12$ periodic orbits are observed with wave number $l=2$, resp.\ $l=2$ and 3 that are contained in $\Fix(G_8^4)$, resp.\ $\Fix(G_{12}^6)$ and $\Fix(G_{12}^4)$ (see Figure~\ref{fig:Symmn4}). In the same figure we show that for $n=28$ an attractor, with wave number $l=6$, exists that is contained in $\Fix(G_{28}^{14})$.
\begin{figure}[ht!]
  \includegraphics[width=1.02\textwidth]{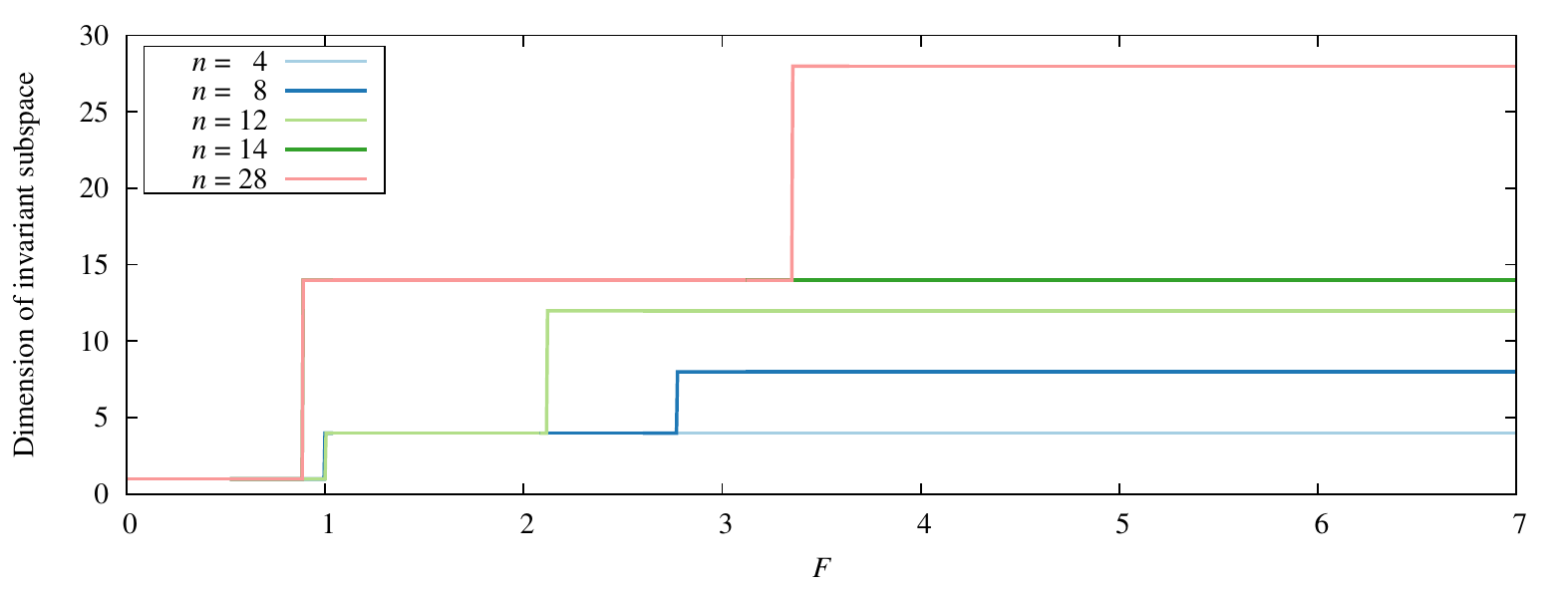}
  \caption{As Figure~\ref{fig:Symmn6}, but with various dimensions and positive $F$. For $n=8$ and $12$ the global attractor corresponds to the one for $n=4$. A similar phenomenon occurs for dimensions $n=14$ and $28$.}\label{fig:Symmn4}
\end{figure}

We therefore conjecture that when the spatial wave number $l$ of a periodic orbit $P(t)$ and the dimension $n$ satisfy $\gcd(l,n)=g>1$, then $P(t)\in\Fix(G_n^{n/g})$, i.e.\ the periodic orbit generated through the first Hopf bifurcation is symmetric. This phenomenon can be explained by the fact that such a wave splits into $g$ parts, where each part constitutes a wave with wave number $l/g$ that corresponds to the wave in dimension $n/g$. Note that this also includes the case where $\gcd(l,n)=l$, which is mentioned in \cite{Lorenz06b}. Periodic orbits with such a feature can arise in many dimensions, even if they are unstable as they emerge from a later Hopf bifurcation of the trivial equilibrium. A further discussion of this phenomenon lies beyond the scope of this article, but will be investigated in forthcoming work.

\subsection{Multiple pitchfork bifurcations}\label{sec:PFcascade}
In section~\ref{sec:AnalyticalResults} we have proven that it is possible to have two pitchfork bifurcation after each other, namely, when $n$ is a multiple of 4. Numerically, we observe that there can be even more subsequent pitchfork bifurcations after these two bifurcations. Even though these additional bifurcations happen after the Hopf bifurcations of Case 3 in the previous subsection (and therefore they occur for unstable equilibria and generate unstable equilibria), they can entail large groups of symmetries, an exponentially increasing number of equilibria and they show a beautiful structure. Since this possibly influences the dynamical structure for smaller $F$, we will discuss here the appearance of multiple pitchfork bifurcations and explain their presence using the exposition of symmetry from section~\ref{sec:AnalyticalResults}.

In the following exposition we write the dimension uniquely as $n = 2^q p$, with $q \in \Nat \cup\{0\}$ arbitrary and $p$ odd. One should bear in mind that the cases $q=1$ and $q=2$ are completely covered by the results proven in sections~\ref{sec:PF1} and~\ref{sec:PF2}. Consequently, we assume $q\geq3$ in the following, which enables the occurrence of more than two subsequent pitchfork bifurcations. and are complementary to the analytical results. Let us start with some notation that anticipates the results later on.

\paragraph{Notation}First of all, we call the pitchfork bifurcation which is the $l$-th in the row the $l$-\emph{th pitchfork bifurcation} and denote its bifurcation value as $F_{\PF,l}$. Clearly, $F_{\PF,l} < F_{\PF,l-1}$ and by definition the $l$-th pitchfork bifurcation occurs for equilibria generated through the $(l-1)$-th bifurcation. In the previous sections we have already used this nomenclature for the cases $l=1, 2$.

Furthermore, we will introduce some notation that anticipates the results later on. The groups~\eqref{eq:subgroup} and invariant subspaces~\eqref{eq:fixedpointsubspaceGmn} with $m=2^l$ such that $0\leq l\leq q$ are of particular importance in the description of the symmetry and related pitchfork bifurcations. Therefore, we define the special invariant subspace $\Fix(G_n^{2^l})\subset \Real^n$ of system~\eqref{eq:Lorenz96} as
  \begin{equation}\label{eq:invmanVl}
    V^l := \Fix(G_n^{2^l}) = \{x\in\Real^n: x_{j+2^l} = x_j\ \textrm{for all}\ 0\leq j \leq n-1\},
  \end{equation}
  where $0\leq l\leq q$ and the index of $x$ has to be taken modulo $n$.
Note that these invariant manifolds also played a crucial role in the proofs of Theorems~\ref{thm:PFBif} and~\ref{thm:PF2Bif}.

By the discussion in section~\ref{sec:symmetries} it is easy to see that each invariant manifold $V^l$ contains all of its `predecessors':
\begin{equation*}
  V^{l'}\subset V^l, \qquad 0\leq l' \leq l.
\end{equation*}
Also, by the definition of $V^l$, Proposition~\ref{prop:invmanifold} immediately implies the following result:
\begin{cor}\label{cor:invmanifoldVl}
    Let $n=2^q p$ and $0\leq l \leq q$. Then the dynamics of the $n$-dimensional Lorenz-96 model restricted to the invariant manifold $V^l$ is topologically equivalent to the Lorenz-96 model of dimension $2^l$.
\end{cor}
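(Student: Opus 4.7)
The plan is to observe that Corollary~\ref{cor:invmanifoldVl} is a direct specialisation of Proposition~\ref{prop:invmanifold} to the case where the divisor of $n$ is a power of two. All the work has already been done in the proposition, so the proof reduces to a simple divisibility check together with a translation between the two notations.

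First I would unfold the definition~\eqref{eq:invmanVl} to identify $V^l$ with $\Fix(G_n^{2^l})$, which is meaningful precisely when $2^l$ divides $n$ (so that the subgroup $G_n^{2^l}$ of~\eqref{eq:subgroup} is well-defined). Next I would verify this divisibility: writing $n = 2^q p$ with $p$ odd and assuming $0 \leq l \leq q$, we have $2^l \mid 2^q$ and hence $2^l \mid 2^q p = n$, so we may set $m = 2^l$ and $k = 2^{q-l} p$ in the hypothesis of Proposition~\ref{prop:invmanifold}, giving $n = k m$.

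With these identifications in place I would invoke Proposition~\ref{prop:invmanifold} directly: the dynamics of the $n$-dimensional Lorenz-96 model restricted to $\Fix(G_n^m) = \Fix(G_n^{2^l}) = V^l$ is topologically equivalent to the $m$-dimensional Lorenz-96 model, i.e.\ to the $2^l$-dimensional one. The homeomorphism implementing this equivalence is the one already constructed in the proof of Proposition~\ref{prop:invmanifold}, namely the projection onto the first $2^l$ coordinates, with inverse the map that periodically repeats those coordinates $n/2^l = 2^{q-l} p$ times.

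There is no real obstacle here; the only thing to get right is the bookkeeping that $V^l$, which is defined via the periodicity condition $x_{j+2^l} = x_j$, coincides with the fixed-point subspace $\Fix(G_n^{2^l})$ of formula~\eqref{eq:fixedpointsubspaceGmn}. This identification is immediate from the fact that $G_n^{2^l} = \langle \gamma_n^{2^l}\rangle$ and that $\gamma_n^{2^l} x = x$ is equivalent to $x_{j+2^l} = x_j$ for every $j$ (indices modulo $n$). Once this is noted, the corollary follows at once.
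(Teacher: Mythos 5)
Your proposal is correct and matches the paper's reasoning exactly: the paper presents this corollary as an immediate consequence of Proposition~\ref{prop:invmanifold} applied with $m=2^l$ and $k=2^{q-l}p$, which is precisely the divisibility check and notational translation you spell out. No gaps.
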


Furthermore, inspired by Remark~\ref{rmk:Kuznetsov} and equation~\eqref{eq:eqxi1} we also define
\begin{equation}\label{eq:eqxilj}
  \xi^l_j \in V^l, \qquad 0 \leq j \leq 2^l-1,
\end{equation}
to be the equilibria generated by the $l$-th pitchfork bifurcation, which have the same symmetry as equilibria of the form $x^{2^l}$. Similarly, let $\xi^l$ be the collection of all equilibria $\xi^l_j$,
\begin{equation*}%\label{eq:eqxil}
  \xi^l := \{ \xi^l_j \in V^l,\ 0 \leq j \leq 2^l-1\} \subset V^l,
\end{equation*}
which turns out to contain all equilibria that share the same properties. Accordingly, for $l=1$ we have $\xi^1 = \{\xi^1_0,\xi^1_1\} \subset V^1$ as defined in equation~\eqref{eq:eqxi1}. Likewise, we can define $\xi^0 \equiv x_F \in V^0$, for convenience.

\paragraph{Numerical observations}In Table~\ref{tab:ListnumberPF} we list the numbers of successive pitchfork bifurcations that are observed for specific even dimensions as well as the total number of equilibria generated through these bifurcations including the trivial equilibrium~$x_F$ (right column). The number of pitchfork bifurcations for a specific dimension $n=2^qp$, as above, turns out to be precisely the exponent $q$. Accordingly, we assume in the following that $0\leq l \leq q$, which coincides with the restriction for $V^l$ in equation~\eqref{eq:invmanVl}.

\begin{table}[ht!]\centering
    \caption{The number of successive pitchfork bifurcations and the corresponding total number of (possibly unstable) equilibria after the last pitchfork bifurcation as observed in selected even dimensions.}
    \label{tab:ListnumberPF}
    \begin{tabular}{rrr}
        \hline
        $n$ & \#PF's & \#equilibria \\
        \hline
        2 & 1 & 3 \\
        4 & 2 & 7 \\
        6 & 1 & 3 \\
        8 & 3 & 15 \\
        10 & 1 & 3 \\
        12 & 2 & 7 \\
        14 & 1 & 3 \\
        16 & 4 & 31 \\
        20 & 2 & 7 \\
        24 & 3 & 15 \\
        32 & 5 & 63 \\
        36 & 2 & 7 \\
        64 & 6 & 127 \\
        128 & 7 & 255 \\
        256 & 8 & 511 \\
        512 & 9 & 1023 \\
        \hline
    \end{tabular}
\end{table}

Besides, the bifurcation values $F_{\PF,l}$ are independent of $n$ for all $l$. These fixed values $F_{\PF,l}$ are listed in Table~\ref{tab:ListFPj} for $l\leq9$ and are obtained by numerical continuation in the dimensions $n=2^l$ using the software packages \textsc{Auto-07p} \cite{Doedel12} and \textsc{MatCont} \cite{Dhooge11}. In addition, the $l$-th pitchfork bifurcation occurs for \emph{all} equilibria $\xi^{l-1}_j(F)$ at exactly the same bifurcation value $F_{\PF,l}$. So, when we speak about `the $l$-th pitchfork bifurcation' there are actually $2^{l-1}$ simultaneous pitchfork bifurcations of conjugate equilibria, generating in total $2^l$ new equilibria.

Even more, we observed that all these new equilibria have the same entries in the same order but shifted, which justifies our notation of the equilibria~\eqref{eq:eqxilj}. Therefore, the equilibria $\xi^l_j\in V^l$ satisfy in general
\begin{equation}\label{eq:conjeq}
  \gamma_n^k \xi^l_j = \xi^l_{j+k}, \qquad \text{for all}\ 0 \leq j, k \leq 2^l-1,
\end{equation}
where the lower index of $\xi$ should be taken modulo $2^l$. As described in section~\ref{sec:symmetries}, all these conjugate solutions have the same properties and therefore it suffices to study only one copy of them, say $\xi^l_0$. We will often just refer to the set $\xi^l$ (so, without index) when we describe their common properties.

\begin{table}[ht!]\centering
    \caption{List of bifurcation values $F_{\PF,l}$ for the $l$-th pitchfork bifurcation, which are known up to $l=9$ and that are independent of the dimension $n$. The two right columns give the distances between the successive pitchfork bifurcations and their ratios $r_l = (F_{\PF,l-1}-F_{\PF,l-2})/(F_{\PF,l}-F_{\PF,l-1})$.}
    \label{tab:ListFPj}
    \begin{tabular}{rrrr}
        \hline
        $l$ & $F_{\PF,l}$ & Distance to $F_{\PF,l-1}$ & $r_l$\\
        \hline
        1 & $-0.5$       & --        & -- \\
        2 & $-3$         & 2.5       & -- \\
        3 & $-6.6$       & 3.6       & 0.694444 \\
        4 & $-8.0107123$ & 1.41071   & 2.55190 \\
        5 & $-8.4360408$ & 0.425329  & 3.31676 \\
        6 & $-8.5275625$ & 0.0915217 & 4.64730 \\
        7 & $-8.5474569$ & 0.0198944 & 4.60037 \\
        8 & $-8.5517234$ & $4.2665\times10^{-3}$ & 4.66289 \\
        9 & $-8.5526377$ & $9.143\times10^{-4}$  & 4.66681 \\
        \hline
    \end{tabular}
\end{table}
Table~\ref{tab:ListFPj} also shows that the distance between successive pitchfork bifurcations decreases as $l$ increases. The values of their ratios $r_l$ suggest that
\begin{equation*}
  \lim_{l\rightarrow\infty} r_l = \lim_{l\rightarrow\infty} \frac{F_{\PF,l-1}-F_{\PF,l-2}}{F_{\PF,l}-F_{\PF,l-1}} = \delta,
\end{equation*}
where $\delta \approx 4.66920$ is Feigenbaum's constant. Therefore, the $q$-th and last pitchfork bifurcation of a specific dimension $n$ will be expected for the bifurcation value $F_{\PF,q} \geq F_{\PF,\infty} \approx -8.55289$.

\paragraph{Visualisation of structure}The structure of pitchfork bifurcations and equilibria that we observed by numerical analysis is summarised in Figure~\ref{fig:Lz96-PFStructureFullFig}, which we will now explain. The figure presents a schematic view for the case $n=2^qp$ with $q=4$ and gives an indication for the bifurcation structure for general $q\geq3$. First of all, the horizontal line in the middle represents the trivial equilibrium $\xi^0 = x_F$ which is stable for $F> F_{\PF,1}$. At the point $PF_1$ we see that two stable equilibria $\xi^1_{0,1}$ emerge, while $\xi^0$ becomes unstable: the first supercritical pitchfork bifurcation.

\begin{figure}[p]
  \centering
  \vspace*{-8em}
   \makebox[\textwidth][c]{\includegraphics{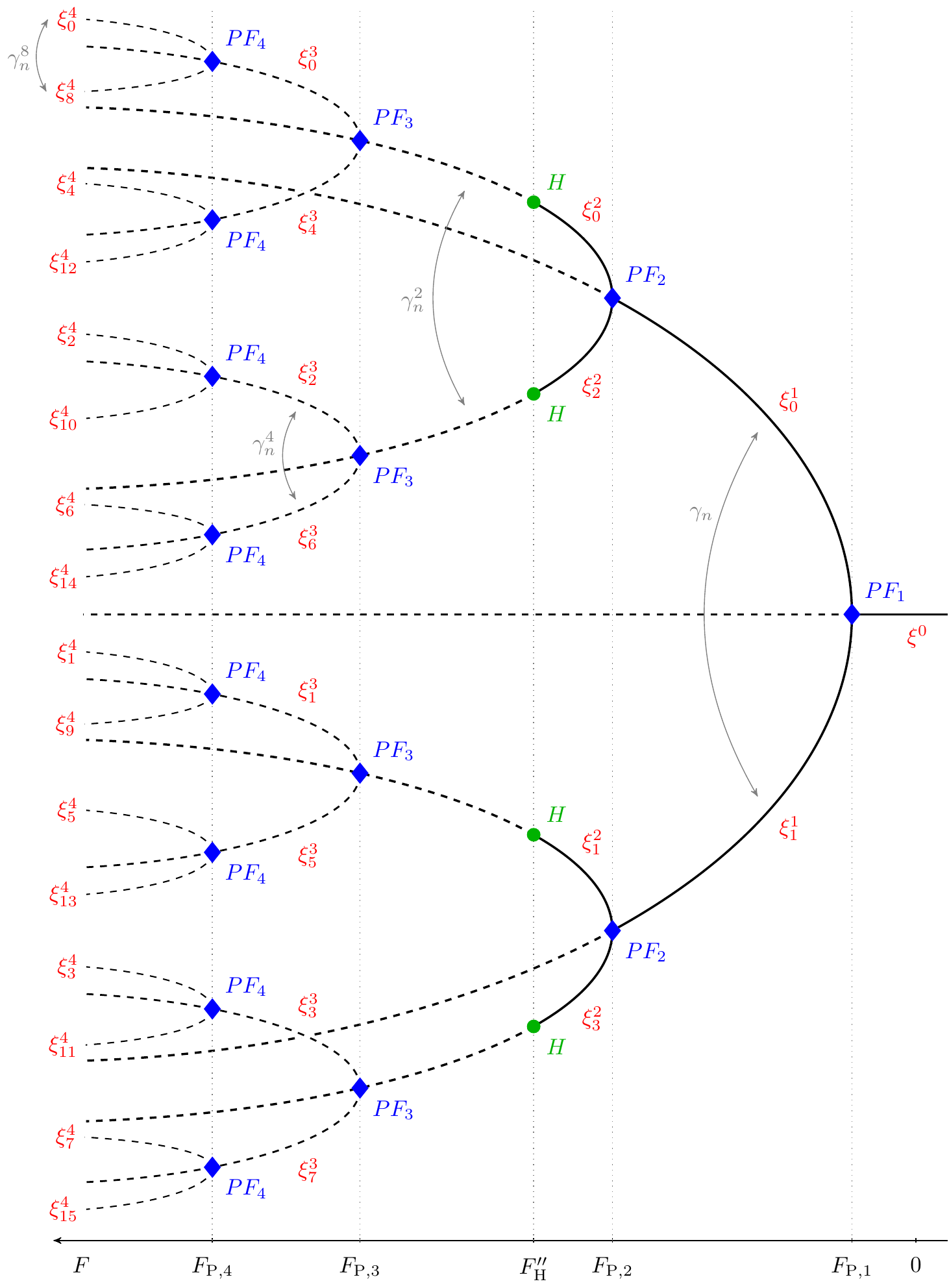}}\\
   \caption{Schematic bifurcation diagram of a $2^qp$-dimensional Lorenz-96 model for negative $F$ with $q = 4$ subsequent pitchfork bifurcations. The label $PF_l$, $1\leq l\leq q$, denotes the $l$-th (supercritical) pitchfork bifurcation with bifurcation value $F_{\PF,l}$ as in Table~\ref{tab:ListFPj}; $H$ stands for a (supercritical) Hopf bifurcation with bifurcation value $-3.9<F_{\Hf}''<-3.5$. Each branch of equilibria is labelled with $\xi^l_j$ according to equation~\eqref{eq:eqxilj}, where $l$ indicates that the branch is generated by the $l$-th pitchfork bifurcation and contained in $V^l$ and $j$ denotes how often we have to apply $\gamma_n$ to $\xi^l_0$ to obtain this branch, as in equation~\eqref{eq:conjeq}. A solid line represents a stable equilibrium; a dashed line represents an unstable one. The arrows in gray indicate the relation between the mutual branches. Similar diagrams can be obtained for any $q\geq3$.}\label{fig:Lz96-PFStructureFullFig}
\end{figure}

Secondly, both equilibria $\xi^1_{0,1}$ exhibit a pitchfork bifurcation $PF_2$ at $F_{\PF,2}$. In both cases a pair of stable, $\gamma_n^2$-conjugate equilibria appear, i.e.~$\xi^2_2=\gamma_n^2 \xi^2_0$ and $\xi^2_3=\gamma_n^2 \xi^2_1$. Moreover, by formula~\eqref{eq:conjeq} these pairs are also $\gamma_n$-conjugate to each other, which means that we can switch between the branches originating from $\xi^1_0$ and those from $\xi^1_1$ by applying $\gamma_n$.

Next, all four equilibria from $\xi^2$ exhibit a supercritical Hopf bifurcation, by Case 3 of section~\ref{sec:Hbif}. As a result, $\xi^2$ and all successive equilibria $\xi^l$, $2<l\leq q$ are unstable for $F< F_{\Hf}''$. Thereafter, the third pitchfork bifurcation $PF_3$ occurs at $F_{\PF,3}$ and generates $2^3$ unstable and pairwise $\gamma_n^4$-conjugate equilibria $\xi^3$. Finally, the fourth pitchfork bifurcation generates the equilibria $\xi^4\subset V^4$. This completes the full structure with $2^5-1$ unstable equilibria.

\paragraph{Explanation by symmetry}The preceding phenomena can be explained using the concepts introduced in section~\ref{sec:symmetries}. In general, at a pitchfork bifurcation there is a breaking of the symmetry: before the bifurcation there exist an equilibrium $x_0$ satisfying $R x_0 = x_0$, where $R$ represents $\Integer_2$-symmetry, while after the bifurcation two additional equilibria $x_{1,2}$ appear that satisfy $R x_1 = x_2$ \cite{Kuznetsov04}. So, the new equilibria $x_{1,2}$ after the bifurcation have a lower order of symmetry than the bifurcating equilibrium $x_0$, as explained by Remark~\ref{rmk:Kuznetsov}. In terms of the invariant subspaces, this means that the smallest invariant subspace containing $x_{1,2}$ should be larger than the one containing $x_0$. More explicitly: if $x_0 \in \Fix(G_n^m)$, with $m\leq \tfrac{n}{2}$ minimized, then the two resulting equilibria $x_{1,2}$ are in $\Fix(G_n^{m'})$ with $m'=2m$ (due to $\Integer_2$-symmetry).

In section~\ref{sec:PF1} we have demonstrated that the equilibrium $\xi^0 = x_F \in V^0$ exhibits the first pitchfork bifurcation and that two stable equilibria $\xi^1 \subset V^1$ appear. The second pitchfork bifurcation occurs for both equilibria $\xi^1$ simultaneously and generates stable equilibria $\xi^2\subset \Fix(G^4_n) = V^2$, as shown in section~\ref{sec:PF2}. In general, assuming that $l\leq q$ and that the equilibria $\xi^{l-1}\subset V^{l-1}$ generated through the $(l-1)$-th pitchfork bifurcation again exhibit a pitchfork bifurcation, then the $l$-th pitchfork bifurcation generates $2^l$ new branches of equilibria $\xi^l_j(F)\in V^l$, $0\leq j\leq 2^l-1$, where $F<F_{\PF,l}$. Thus, the total number of equilibria for dimension $n$ generated by the $q$ pitchfork bifurcations (including the trivial equilibrium) is equal to $2^{q+1}-1$, which is confirmed by the right column of Table~\ref{tab:ListnumberPF}.

The observation that the $l$-th pitchfork bifurcation consists of $2^{l-1}$ simultaneous pitchfork bifurcations of conjugate equilibria can be explained by noting that all equilibria $\xi^{l-1}$ satisfy the relation~\eqref{eq:conjeq} and therefore share the same properties and, in particular, the same eigenvalues. The fact that the bifurcation values $F_{\PF,l}$ do not depend on the dimension is a direct consequence of Proposition~\ref{prop:invmanifold}.

In particular, the $q$-th pitchfork bifurcation generates equilibria $\xi^q \subset V^q = \Fix(G^{2^q}_n)$. Consequently, there cannot be more than $q$ subsequent pitchfork bifurcations because this requires the resulting equilibria to be in $\Fix(G^{2^{q+1}}_n)$, which does not exist. Hence, for any dimension $n=2^qp$ there can be at most $q$ pitchfork bifurcations.

Based on these numerical observations and their interpretation in terms of symmetry, the following conjecture seems plausible:
\begin{cnj}
  The number of subsequent pitchfork bifurcations in the Lorenz-96 model of dimension $n= 2^q p$, where $q\in\Nat\cup\{0\}$ and $p$ odd, is exactly equal to $q$.
\end{cnj}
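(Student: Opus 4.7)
The plan is to establish the conjecture via two matching bounds: at most $q$ pitchfork bifurcations can occur for $n = 2^q p$, and at least $q$ actually do occur. The upper bound is essentially already contained in the paper: by Remark~\ref{rmk:Kuznetsov}, every pitchfork in the chain breaks a $\Integer_2$-symmetry and sends the bifurcating equilibrium from $V^{l-1}$ into the strictly larger invariant subspace $V^l = \Fix(G_n^{2^l})$. But $V^l$ is a proper invariant subspace only when $2^l \mid n$, which for $n = 2^q p$ with $p$ odd forces $l \leq q$; hence no more than $q$ such symmetry-breaking bifurcations can occur consecutively along one branch.

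For the lower bound I would proceed by induction on $l \in \{1, \ldots, q\}$. The base cases $l = 1, 2$ are Theorems~\ref{thm:PFBif} and~\ref{thm:PF2Bif}. For the inductive step, Proposition~\ref{prop:invmanifold} reduces the claim to the minimal dimension $n = 2^l$: once established there, the bifurcation extrapolates to every $n = 2^q p$ with $q \geq l$ via the invariant manifold $V^l$. The strategy in dimension $2^l$ mirrors Lemmas~\ref{lem:PF1n2} and~\ref{lem:PF2n4}: first, characterise the equilibria $\xi^{l-1} \subset V^{l-1}$ inherited from the previous step; second, take $R_{2^l} := \gamma_{2^l}^{2^{l-1}}$, which realises $\Integer_2$-symmetry with fixed-point subspace $V^{l-1}$; third, locate a simple real eigenvalue of the Jacobian at $\xi^{l-1}(F)$ that crosses zero at $F = F_{\PF,l}$; and fourth, verify that the corresponding eigenvector is anti-invariant under $R_{2^l}$, so that Theorem~\ref{thm:Kuznetsov} yields a pitchfork rather than a fold. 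Supercriticality would then follow from the standard cubic-coefficient computation on the $R_{2^l}$-invariant centre manifold, as exemplified for $l = 1$ by equation~\eqref{eq:CMPF}.

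The main obstacle is the eigenvalue-crossing step for $l \geq 3$: from $l = 3$ onwards, the coordinates of $\xi^{l-1}$ are only implicitly defined by a nonlinear algebraic system, and their Jacobian is no longer circulant. A natural reduction uses the block decomposition induced by $\Real^{2^l} = V^{l-1} \oplus \Fix(G_{2^l}^{2^{l-1}})^{\bot}$: by equivariance of the vector field, the $V^{l-1}$-block reproduces the eigenvalues already understood from the $2^{l-1}$-dimensional Lorenz-96 model, so only the transverse block carries new information, and one has to show that its characteristic polynomial has a single real root crossing zero at $F = F_{\PF,l}$ with eigenvector in the $-1$-eigenspace of $R_{2^l}$. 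The apparent rapid convergence of the ratios in Table~\ref{tab:ListFPj} to Feigenbaum's constant strongly suggests that $F_{\PF,l}$ admits no clean closed form for larger $l$, so carrying out this step rigorously throughout the whole chain most likely requires a computer-assisted argument (e.g.\ interval-arithmetic bounds on the transverse characteristic polynomial and its $F$-derivative) rather than a symbolic one; this is precisely why the result is offered here as a conjecture rather than a theorem.
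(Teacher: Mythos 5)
The statement you are addressing is offered in the paper as a conjecture, and the paper itself supplies no proof: its justification consists of the numerical continuation data in Tables~\ref{tab:ListnumberPF} and~\ref{tab:ListFPj} together with a symmetry argument that only establishes the upper bound. Your roadmap coincides with that partial justification almost exactly. Your upper bound is the paper's own argument (the $(q+1)$-th pitchfork would require conjugate equilibria in $\Fix(G_n^{2^{q+1}})$, which does not exist when $2^{q+1}\nmid n$), and your lower-bound scheme --- reduce to the minimal dimension $2^l$ by Proposition~\ref{prop:invmanifold}, set $R_{2^l}=\gamma_{2^l}^{2^{l-1}}$, find a simple zero eigenvalue with anti-invariant eigenvector, invoke Theorem~\ref{thm:Kuznetsov} --- is precisely the mechanism of Lemmas~\ref{lem:PF1n2} and~\ref{lem:PF2n4}. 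You also correctly name the obstruction the paper names in its conclusion: for $l\geq 3$ the equilibria $\xi^{l-1}$ are only implicitly defined and their Jacobian is no longer circulant. So the proposal is a faithful reconstruction of the intended strategy, but it does not prove the conjecture, and you say as much.

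The one point worth sharpening is that the gap is larger than ``needs a computer-assisted computation.'' An interval-arithmetic verification of the eigenvalue crossing, the eigenvector parity, and the sign of the cubic coefficient can in principle settle any \emph{fixed} $l$, but the conjecture quantifies over all $q$, and the minimal dimension $2^l$ grows exponentially; a case-by-case rigorous computation can therefore only ever yield finitely many instances of the inductive step, not the induction itself. What is genuinely missing is a uniform mechanism that propagates the existence of the $l$-th pitchfork to the $(l+1)$-th --- for instance a renormalization-type argument relating the transverse block of the Jacobian at $\xi^l$ in dimension $2^{l+1}$ to the corresponding block at $\xi^{l-1}$ in dimension $2^l$, which the Feigenbaum-like accumulation of the ratios $r_l$ in Table~\ref{tab:ListFPj} strongly hints at but which neither you nor the paper constructs. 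A second, smaller soft spot is the upper bound: as stated it presumes that every pitchfork in the chain breaks exactly the $\Integer_2$-symmetry realised by $\gamma_n^{2^{l-1}}$ modulo the current isotropy, i.e.\ that the block size exactly doubles at each step; since $\Gamma_n/G_n^{2^q}\cong\Integer_{2^q}$ still contains elements of order two, ruling out pitchforks of a different symmetry type requires an additional (spectral, not purely group-theoretic) argument. Neither issue is a defect relative to the paper --- the paper leaves both open --- but they are the reason the statement remains a conjecture after your proposal.
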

In summary, the results in this section show that in each dimension $n=2^qp$ there are exactly $q$ pitchfork bifurcations for $F<0$ and the phenomenon fits well into the theoretical description given in section~\ref{sec:symmetries}.

%----------------------------------------------------------------------------
\section{Conclusions and outlook}\label{sec:Conclusion}
In this investigation, the aim was to unravel the symmetrical nature of the Lorenz-96 model and to understand its dynamics better with this information, building on \cite{Kekem17a1}. The model is equivariant in any dimension with respect to a cyclic left shift $\gamma_n$ and the groups of symmetries give rise to invariant manifolds for each divisor $m$ of the dimension $n$. One of the major findings of this paper was that the invariant manifolds allow us to extrapolate results that are proven for a certain dimension $n$ to all multiples of $n$. These findings enhance our understanding of the Lorenz-96 model.

In the present study we exploited the symmetry mainly to study and explain the dynamics for negative parameter values, where symmetry turns out to play an important role. We have proven analytically the existence of one, resp.\ two, pitchfork bifurcations in dimension $n=2$, resp.\ $n=4$, each of which gives rise to $\gamma_n^{n/2}$-conjugate equilibria. Consequently, in any even dimension a pitchfork bifurcation takes place, with an additional subsequent pitchfork bifurcation when the dimension equals $n=4k$, $k\in\Nat$.

Numerical investigation shows another significant finding of this study: in any dimension $n$ the number of successive pitchfork bifurcations is exactly equal to $q$, where $q$ is the nonnegative integer such that the dimension $n$ is uniquely given by $n=2^qp$, with $p$ odd. However, to establish this result analytically is a nontrivial task, since the Jacobian is no longer circulant for the nontrivial equilibria that arise from each pitchfork bifurcation. Moreover, to prove other facts beyond the $l$-th pitchfork bifurcation will become increasingly difficult, since the lowest dimension needed is $n=2^l$ and thus increases exponentially with $l$. On the other hand, once we have found an equilibrium $\xi^l_j$ in a certain invariant subspace $V^l$, the relation~\eqref{eq:conjeq} guarantees that the $2^l-1$ other equilibria have the same properties. This finding together with the $\Integer_n$-symmetry may have important implications for the dynamics after the cascade of pitchfork bifurcations. Although the periodic orbit that emerges from the supercritical Hopf bifurcation is the stable attractor for $F<F_{\Hf}''$ (see section~\ref{sec:Hbif}), the cascade-like pitchfork bifurcations can have a big influence on the dynamics via the large number of generated equilibria. Such an influence has been observed in dimension $n=4$ for positive $F$, where 4 unstable and $\gamma_4$-conjugate equilibria give rise to a heteroclinic structure that causes the dynamics on the chaotic attractor to return to nearly periodic behaviour repeatedly, i.e.~the classical type 1 intermittency scenario \cite{Kekem17a1}.

The influence of the symmetry on the Lorenz-96 model for $F>0$ is less clear. We have shown in \cite{Kekem17a1} that the first bifurcations for the trivial equilibrium are always Hopf or Hopf-Hopf bifurcations. The emerging periodic orbits have symmetries under certain circumstances, namely when their wave number has a divisor in common with the dimension of the model. However, further work needs to be done to establish this conjecture. More symmetries might be found via other equilibria than the trivial one, but it is in general nontrivial to locate them.

Furthermore, a pattern of attractors for $n=5m$, $m=1,\ldots, 10$ is discussed in \cite{Kekem17a1}. This phenomenon can be explained by the invariant manifolds which allow us to extrapolate the results for low dimension to higher dimensions. However, as explained in Remark~\ref{rmk:invmanifold}, this method does not guarantee to give the complete bifurcation pattern and route to chaos for any multiple of the lowest possible dimension, but only the dynamics restricted to the corresponding invariant manifold. It is also possible that another bifurcation will take place before the phenomena extrapolated from low dimension and thus a different attractor gains stability. Such an event is indeed observed in the example of the pattern for the dimensions $n=5m$, where the pattern is interrupted at $m=11$.

Altogether, the results in this paper provide important insights into the symmetrical structure of the Lorenz-96 model. This also helps to understand the bigger dynamical structure and its travelling waves, partly described in \cite{Kekem17a1,Kekem17c1,Orrell03}. Further studies need to be carried out in order to unravel the bifurcations and routes to chaos of the stable attractors for negative $F$. An interesting question in this context is how the symmetry influences the dynamics for parameter values beyond the pitchfork bifurcations or for larger dimensions. The attractor for $n=4$ and large negative $F$ is studied in \cite{Lorenz84}, although without taking into account its potential symmetry. In particular, note that for $F<0$ the bifurcation patterns up to and including the Hopf bifurcation can be divided into three different cases, which might have consequences for the number of different routes to chaos. In our future research we will therefore include a further analysis of the bifurcation structure in combination with the symmetry of the model and explore the routes to chaos for $F<0$.

\DeclareRobustCommand{\VAN}[3]{#3}
\newpage\label{sec:Bibliography}
\bibliographystyle{Lz96Symmetry-DvKAES}
\bibliography{Lz96Symmetry-DvKAES}
\addcontentsline{toc}{section}{References}

\end{document}